\newcommand{\ccr}{\textit{\textbf{ccr}}}
\newcommand{\dcdc}{\textit{\textbf{dcdc}}}
\newtheorem{theorem}{Theorem}
\newtheorem{definition}{Definition}
\newtheorem{lemma}[theorem]{Lemma}
\newtheorem{proposition}[theorem]{Proposition}
\newtheorem{observation}{Observation}
\newtheorem{assumption}{Assumption}
\newtheorem{conjecture}{Conjecture}
\newtheorem{property}{Property}
\newtheorem{claim}{Claim}
\newtheorem*{remark}{Remark}
\newlength{\saveparindent}
\def\ifpdf\input{#.pdf_t}\else\input{#.ps_t}\fi1{\ifpdf\input{#1.pdf_t}\else\input{#1.ps_t}\fi}
\title{Directed cycle double covers and cut-obstacles}
\author{
  Andrea~Jim\'enez\thanks{Instituto de Matem\'atica e Estat\'istica, Universidade de S\~ao Paulo,
\texttt{ajimenez@ime.usp.br}. Supported by 
CNPq (Proc.~477203/2012-4 and~456792/2014-7), FAPESP (Proc.~2011/19978-5,~2013/23331-2 and~2013/03447-6) and Project USP MaCLinC/NUMEC.}\\
\\
  \and
  Martin~Loebl\thanks{Department of Applied Mathematics,
  Charles University, \texttt{loebl@kam.mff.cuni.cz}. 
  Partially supported by the Czech Science Foundation under the contract number P202-13-21988S.}\\
}
\date{}
\begin{document}

\maketitle
\begin{abstract}
A directed cycle double cover of a graph $G$ is a family of cycles of $G$,
each provided with an orientation, such that every edge of $G$ is covered by 
exactly two oppositely directed cycles.
Explicit obstructions to the existence of a directed cycle double cover in a graph
are bridges. Jaeger~\cite{Jaeger19851} conjectured that bridges are actually the only obstructions.
One of the difficulties in proving the Jaeger's conjecture lies in
discovering and avoiding obstructions to partial strategies that, if successful, create directed cycle double covers. 
In this work, we suggest a way to circumvent this difficulty.
We formulate a conjecture on graph connections (see~\cite{frank} for recent monograph), whose
validity follows by the successful avoidance of one cut-type obstruction that we call cut-obstacles.
The main result of this work claims that our \emph{cut-obstacles avoidance conjecture}
already implies Jaeger's directed cycle double cover conjecture.
\end{abstract}

\section{Introduction, outline of methods and main results.}
\label{sec.int}
Jaeger's directed cycle double cover conjecture~\cite{Jaeger19851} asserts that 
for every 2-connected graph $G$ there exists a family of cycles $\mathcal{C}$ of $G$
such that it is possible to prescribe an orientation to each cycle of $\mathcal{C}$
in such a way that each edge $e$ of $G$ belongs to exactly two cycles of $\mathcal{C}$ and 
these cycles induce opposite orientations on~$e$. 
Jaeger's conjecture trivially 
holds in the class of cubic bridgeless planar graphs.
And it has certainly been positively settled for some more classes of graphs,
as for example, graphs that admit a nowhere-zero
4-flow~\cite{MR1395462}
and 2-connected projective-planar graphs~\cite{Ellingham:2011:OEO:1953656.1954213}.
We kindly invite the reader interested in more details about the development of the directed
cycle double cover conjecture and related problems to consult~\cite{Jaeger19851,zhang1997integer,zhang2012circuit}.

%

A \emph{reduction network} is a communication network consisting of interconnected parts where each part has to perform some task.  
The elements of each part need to communicate in order to successfully perform the task. 
The parts are linearly ordered, and after a part performs its task, the part is no more directly functional and it is \emph{reduced} --- that is, the communication network is updated in such a way that the reduced part is removed and only its residue remains. 
The goal is to make {\em reductions} whose residue help communication in yet functional parts.

The reduction network is modeled by an undirected graph $G$, 
and its linearly ordered parts by the ears of an ear-decomposition of $G$ --- the order of the parts is reversed order of the ears.  
At the initial step, the first part is \emph{reduced} and the updated communication network becomes
a \emph{mixed graph}; that is, the union of a subgraph of $G$ and the residues of the the initial reduction.
At each step, the currently active part 
is \emph{reduced} if its \emph{correct reduction} exists. 
There are multiple obstructions to the existence of \emph{correct reductions}, 
one of them are the \emph{cut-obstacles}.
In this work, we investigate reduction networks that are modeled by {\em robust trigraphs}.

\smallskip

In what follows, we formalize this discussion.
In order to make this paper as self-contained as possible, in the next paragraphs
we repeat some definitions of~\cite{ourwork}. 

\subsubsection*{Robust trigraphs}

In this work, an \emph{ear} is a path on at least 3 vertices or a star on 4 vertices; in particular, an edge is not an ear. 
A \emph{trigraph} is a cubic graph that can be obtained 
from a cycle by sequentially adding \emph{short ears}, that is, ears on 
at most~5 vertices. 
Whenever $H$ is a trigraph, an ear decomposition of $H$ means a short-ear decomposition of $H$
and the expression $(H_0, H_i, L_i)^n$ stands for such an ear decomposition;
that is, where $H_0$ is the initial cycle of the ear decomposition, $L_i$ is the $i$-th short ear, 
$H_i$ is the intermediate graph obtained from $H_0$ by adding the first $i$ short ears
and $H_n =H$. 


For each ear $L$, we denote by $I(L)$ the set of its vertices of degree at least 2 
and if $L$ is a path we say that $L$ is a $k$-ear if the cardinality of $I(L)$ is $k$.
Let $(H_0, H_i, L_i)^n$ be an ear decomposition of $H$.
The {\em descendant} of $I(L_i)$, where $L_i$ is a 3-ear,
is the maximal subgraph $D$ of $H-V(H_i)$ such that $H[I(L_i) \cup V(D)] $ is connected;
the notation $H[X]$ represents the induced subgraph of $H$ on vertex set $X \subset V(H)$. 
In other words, $D$ is the descendant of $I(L_i)$
if and only if $D$ is the maximal subgraph of $H-V(H_i)$
such that for each component $C$ of $D$ there exists an edge 
connecting $C$ and $I(L_i)$. 

\begin{definition}[Robust]
\label{def.wr}
 Let $(H_0, H_i, L_i)^n$ be an ear decomposition of a trigraph $H$.
 We say that $(H_0, H_i, L_i)^n$ is robust if for each 3-ear, say $L_i$,
 the descendant of $I(L_i)$ is composed of at most 2 connected components. 
 Moreover, if the descendant is composed of two connected components, then one of them is an isolated vertex
 adjacent to two vertices in $V(H_0)$. 
\end{definition}

\subsubsection*{Mixed graphs}
A {\em mixed graph} is a 4-tuple $(V, E, A, R)$, where $V$ is a vertex set,
$E$ is an edge set, $A$ is a set of directed edges (arcs), and $R$ is a subset of $A{\times} A$,
that is, a set of pairs of arcs. It is require that in the graph $(V,E)$, that is, the graph on vertex set $V$ and edge set $E$, 
each vertex has degree at least one 
and at most three, and that, in $(V, E, A, R)$, each vertex of degree one (resp. two) in $(V,E)$ is the tail
of exactly two arcs (resp. one arc) and the head of exactly two arcs (resp. one arc); note that 
directed loops are allowed. 
Throughout this paper, $\{u,v\}$ denotes the (non-directed) edge with end vertices $u$, $v$ and $(u,v)$ denotes 
the arc directed from $u$ to~$v$; if no end vertices are specified $\vec{e}$ denotes an arc.

\subsubsection*{Correct reductions}

Let $(V, E, A, R)$ be a mixed graph and $U \subseteq V$. 
A {\em correct reduction} of $U$ on $(V, E, A, R)$ is a procedure that outputs 
a new mixed graph $(V', E', A', R')$ and a list $\mathcal{S}$ of directed paths and cycles (here, 
a directed loop is considered
a directed cycle)
so that the following items hold:
\begin{itemize}
 \item[i)] $V' = V-U$.
 \item[ii)] $E'=\{ \{u,v\} \in E: \{u, v\} \cap U =\emptyset\}$. 
  \item[iii)] Let  $\tilde{A}$ denote the set of arcs obtained by replacing each edge in $E$ 
        incident to a vertex of $U$ by two arcs oppositely directed and
        $A(U)$ be the subset of $A$ that contains all directed 
        edges incident to a vertex of $U$. The list $\mathcal{S}$ is the result of 
        partitioning $A(U){\cup} \tilde{A}$ into correct directed paths 
        and correct directed cycles. A cycle or a path  is {\em correct} 
        if no pair of its arcs is an element of $R$, and if it is not a 2-cycle composed of
        only arcs from $\tilde{A}$;
        in particular, a directed loop is a correct cycle. 
        In addition, a correct path must have both end vertices in $V{-}U$.

        \item[iv)] $A' = \{(x,y) \in A: \{x, y\} \cap U =\emptyset\} \cup A''$. 

The set $A''$ is the set of arcs obtained by replacing each correct directed path $P \in \mathcal{S}$ by a
        new directed edge, say $\vec{e}_P$, with both end vertices in $V{-} U$ and such that 
        $\vec{e}_P$ has the orientation of $P$. 
        
\item[v)] $R' = \{ \{(x,y), (x',y')\} \in R :  \{x, y, x', y'\} \cap U =\emptyset\} \cup R'' \cup \tilde{R}$.

The set $R''$ corresponds to all pairs $\{\vec{e}_P, \vec{e}_{P'}\}$ such that
  $P$ and $P'$ have a vertex in common, or there are arcs $\vec{e} \in P$ and $\vec{g} \in P'$ such that
  $\{\vec{e},\vec{g}\} \in R$.
  The set $\tilde{R}$ corresponds to all pairs $\{\vec{e}_P, \vec{g}\}$ such that
  $P$ contains an arc $\vec{e}$ and $\{\vec{e},\vec{g}\} \in R$.
\end{itemize}

\noindent
An example of a correct reduction is provided in Figure~\ref{fig:correct-red}.
In consequence with the definition of correct reductions, we refer to the elements in $\mathcal{S}$ as \emph{correct paths}
and \emph{correct cycles},
 and to the elements in $R$ as \emph{forbidden pairs}.

\begin{figure}[h]
\centering
\subfigure[]
{
\ifpdf\input{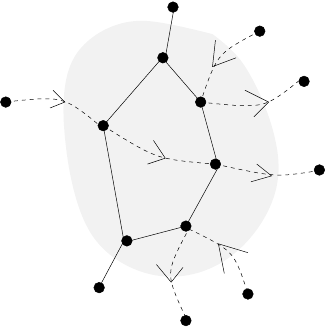_t}\else\input{correct_reduction.ps_t}\fi
\label{fig.srmg00}
}
\subfigure[]
{
\ifpdf\input{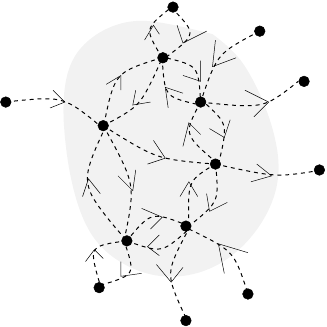_t}\else\input{correct_reduction_1.ps_t}\fi
 \label{fig.srmg01}
}
\subfigure[]
{
\ifpdf\input{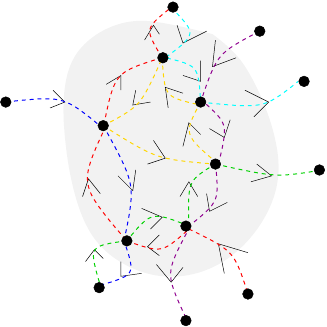_t}\else\input{correct_reduction_2.ps_t}\fi
\label{fig.srmg02}
}
\subfigure[]
{
\ifpdf\input{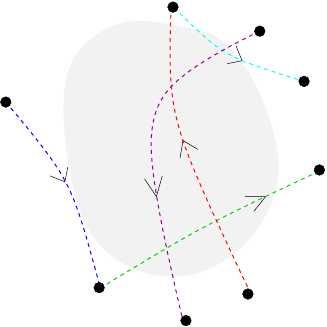_t}\else\input{correct_reduction_3.ps_t}\fi
 \label{fig.bf3}
}
\caption{Correct reduction of $U \subset V$:
(a) elements from $A$ are re\-presented by dotted lines
and the pair $(\vec{e}, \vec{g})$ of arcs is not in $R$,
(b) replace edges of $E$ with an end vertex in $U$ by two arcs oppositely directed,
(c) partition of $A(U) \cup \tilde{A}$ into correct paths and cycles and
(d) resulting structure.}
\label{fig:correct-red}
\end{figure}

 Let $G$ be a cubic graph and $V_0, V_1,\ldots, V_k$ be a partition~of $V(G)$ such that for all $i\in[k]$, the graph $G-(V_k \cup \cdots \cup V_i)$ is connected; by abuse of notation, we refer to such a partition as \emph{connected}.
Let $j \in \{0,1,\ldots,k\}$. A \emph{consecutive correct reduction} (\textit{\textbf{ccr}}, in short) of $V_k, V_{k-1},\ldots, V_j$ is a sequence of $k-j+1$ correct reductions such that 
for each $i \in \{k, \ldots, j\}$, the correct reduction of $V_{i} \subset W_{i}$ on $(W_{i} ,E_{i}, A_{i}, R_{i})$ 
outputs $(W_{i-1} ,E_{i-1}, A_{i-1}, R_{i-1})$; 
where $(W_{k} ,E_{k}, A_{k}, R_{k}) = (V(G) ,E(G), \emptyset, \emptyset)$ 
and  $(W_{-1} ,E_{-1}, A_{-1}, R_{-1})= (\emptyset, \emptyset, \emptyset, \emptyset)$.

The following proposition is proved in Section~2 of~\cite{ourwork}.
It relates the notion of consecutive correct reductions to the one of a directed cycle double cover ({\dcdc}, in short).

\begin{proposition}\label{prop:consecorr}
Let $V_0, V_1,\ldots, V_k$ be a connected partition~of the vertex set of a cubic graph $G$.
Then each {\ccr} of $V_k, V_{k-1},\ldots, V_0$ constructs a {\dcdc} of~$G$.
\end{proposition}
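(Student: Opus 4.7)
The plan is to exhibit, given a {\ccr} of $V_k, \ldots, V_0$, a family of directed closed walks in $G$ covering each edge of $G$ exactly twice with opposite orientations; the desired {\dcdc} then follows after, if necessary, decomposing each closed walk into directed cycles.

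I proceed by induction on the reduction step. After the first reductions $V_k, V_{k-1}, \ldots, V_j$ producing the mixed graph $(W_{j-1}, E_{j-1}, A_{j-1}, R_{j-1})$, I associate to each arc $\vec{a} \in A_{j-1}$ a directed walk $W(\vec{a})$ in $G$ whose endpoints coincide with those of $\vec{a}$ and whose internal vertices lie in the already-reduced set $V_k \cup \cdots \cup V_j$; and to each cycle $C$ already entered into $\mathcal{S}$ a directed closed walk $W(C)$ contained in that same reduced set. The assignment is defined recursively by the reductions: when a correct path $P = (\vec{a}_1, \ldots, \vec{a}_m)$ at the reduction of $V_i$ yields a new arc $\vec{e}_P \in A''$, I set $W(\vec{e}_P)$ to be the concatenation (in order) of the walks $W(\vec{a}_t)$ attached to inherited arcs $\vec{a}_t \in A$ and of the directed edge of $G$ corresponding to $\vec{a}_t \in \tilde{A}$. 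The correct cycles sent to $\mathcal{S}$ are treated analogously. Since consecutive arcs in $P$ meet at vertices of $V_i$, the internal vertices of the concatenated walk stay inside the reduced set.

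The invariant to carry through the induction is: each edge $\{u,v\}$ of $G$ whose first reduction step has already been performed contributes exactly one directed copy of $(u,v)$ and one of $(v,u)$ to the multiset of oriented edges arising from $\{W(\vec{a}) : \vec{a} \in A_{j-1}\} \cup \{W(C) : C \in \mathcal{S}\}$, while every still-untouched edge remains in $E_{j-1}$. Preservation under one reduction of $V_i$ uses three facts: every edge of $E_i$ incident to $V_i$ contributes both oriented copies to $\tilde{A}$; the partition of $A(V_i) \cup \tilde{A}$ into correct paths and cycles is exhaustive, so each oriented copy lands in exactly one walk; and inherited arcs contribute their pre-assigned walks unchanged. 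Finally, reducing $V_0$ leaves $W_{-1} = \emptyset$; since correct paths require endpoints outside the reduced set, at this last step the partition consists entirely of correct cycles added to $\mathcal{S}$. Combined with the invariant, $\{W(C) : C \in \mathcal{S}\}$ contains every directed edge of $G$ exactly once, hence covers every undirected edge of $G$ exactly twice with opposite orientations.

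The main subtlety will be passing from this family of closed directed walks to a family of directed \emph{cycles} of $G$ as required by the definition of a {\dcdc}. Each $W(C)$ is a directed closed trail, by the Eulerian balance built into the definition of a mixed graph together with the multiset-count invariant (which prevents an arc from being reused within a single $W(C)$). Any such trail decomposes canonically into edge-disjoint directed cycles, and replacing each $W(C)$ by this decomposition produces the {\dcdc}. The correctness clause forbidding $\tilde{A}$-only 2-cycles serves to rule out the trivial degeneracy of a directed 2-cycle over a single edge of $G$, which is precisely the configuration that would not correspond to a genuine cycle of $G$.
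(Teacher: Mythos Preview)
The paper does not prove this proposition here; it defers to Section~2 of~\cite{ourwork}. Your inductive bookkeeping---attach a walk $W(\vec a)$ in $G$ to each current arc, maintain the edge-count invariant, and observe that the terminal step yields only closed walks covering every oriented edge of $G$ exactly once---is the natural argument and is sound up to that point.

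The gap is in your final paragraph. You credit the clause forbidding $\tilde A$-only $2$-cycles with ruling out $2$-cycles in the eventual cycle decomposition, but that clause acts only \emph{at the instant an edge $\{u,v\}$ is first doubled}. It does not by itself prevent the two oriented copies $(u,v)$ and $(v,u)$ from being reunited inside some longer correct path or cycle at a later reduction step, and hence from landing together in a single closed trail $W(C)$ whose decomposition into simple directed cycles could then produce the $2$-cycle. What actually prevents this is the forbidden-pair set $R$, which your argument never invokes. When $\{u,v\}$ is first split, the two arcs lie in distinct correct paths $P,P'$ sharing a vertex, so $\{\vec e_P,\vec e_{P'}\}\in R''$; the recursive clauses defining $R''$ and $\tilde R$ then ensure that any later arcs absorbing $\vec e_P$ and $\vec e_{P'}$ remain a forbidden pair and therefore can never occur together in a single correct path or cycle. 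Hence the two orientations of every edge of $G$ end up in distinct final cycles, no $W(C)$ contains both, and no $2$-cycle can appear. You need to make this role of $R$ explicit. (In fact the same mechanism shows each $W(C)$ is already a simple directed cycle, making the decomposition step unnecessary; but your version, once patched, is enough.)
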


\subsubsection*{Cut-obstacles}

There are many potential obstructions to the existence of correct reductions. One of them is a cut-obstacle.

\begin{definition}[Cut-obstacle]
\label{def.cuto}
Let $(V, E, A, R)$ be a mixed graph and  $U\subseteq V$.
We denote by $C_U$ the subset of $E \cup A$ that contains all edges and arcs with exactly one end vertex in $U$. 
We say that $U$ is a {\em cut-obstacle} in $(V, E, A, R)$ if  the number of edges in~$C_U$ is strictly less than twice the number of arcs in $C_U$.
\end{definition}
For an illustration of a cut-obstacle see Figure~\ref{fig.cut3ear}. 

Cut-obstacles are potential obstacles for the existence of correct reductions in the following sense: if we 
assume that all pairs of arcs in $C_U$ belong to $R$, then there is not correct reduction of $U$ on~$(V, E, A, R)$.

The next observation helps understanding the connection between 
robust ear decompositions and cut-obstacles. 
\begin{observation}\label{obs:cuto1}
If $(H_0, H_i, L_i)^n$ is an ear decomposition of a trigraph $H$ and there exists
a 3-ear $L \in \{L_1, \ldots, L_n\}$ such that its descendant is composed of 3 connected
components, then any {\ccr} of $I(L_n), I(L_{n-1}),\ldots, I(L_1)$ creates a cut-obstacle at $I(L)$.
\end{observation}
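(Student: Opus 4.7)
The plan is to show that $I(L)$ is already a cut-obstacle in the mixed graph $M^{(k)} = (V(H_k), E^{(k)}, A^{(k)}, R^{(k)})$ that any {\ccr} of $I(L_n), \ldots, I(L_1)$ produces after reducing $I(L_n), \ldots, I(L_{k+1})$ (so $L = L_k$). Write $I(L) = \{v_1, v_2, v_3\}$ and let $e_i = \{v_i, w_i\}$ be the unique edge of $H$ at $v_i$ that is not in $L$; by hypothesis the three $w_i$'s lie in three distinct components $D_{\sigma(1)}, D_{\sigma(2)}, D_{\sigma(3)}$ of the descendant of $I(L)$, and $v_i$ is the only vertex of $I(L)$ adjacent to $D_{\sigma(i)}$. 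At the stage $M^{(k)}$ no vertex of $V(H_k)$ has been removed yet, so $E^{(k)} = E(H_k)$, and the only edges of $E^{(k)}$ with exactly one end in $I(L)$ are the two edges of $L$ joining $I(L)$ to the endpoints of $L$ in $V(H_{k-1})$; hence $|E(C_{I(L)})| = 2$. Each $v_i$ has $\deg_{E^{(k)}}(v_i) = 2$, so the mixed-graph degree constraint assigns $2$ arc-ends to $v_i$ in $A^{(k)}$ and $6$ arc-ends at $I(L)$ altogether.

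The key auxiliary result is a \emph{no-pass-through} lemma: in any correct reduction of $U$, no vertex $u \in V\setminus U$ can be an internal vertex of a correct directed path or cycle, so every arc-end at $u$ lying in $A(U)\cup\tilde{A}$ must be the final arc of a correct path ending at $u$. This is a one-line degree count: the mixed-graph constraint applied to the output forces the number of new arc-ends at $u$ to equal exactly the number of arc-ends already available at $u$ in $A(U)\cup\tilde{A}$, which leaves no slack for pass-throughs. Applying this lemma inductively over the reductions of $L_n, \ldots, L_{k+1}$, one obtains a walk description: every arc of $A^{(k)}$ represents a directed walk in $H$ whose endpoints lie in $V(H_k)$ and whose internal vertices lie in $V(H)\setminus V(H_k) = I(L_{k+1})\cup\cdots\cup I(L_n)$.

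For an arc incident to $v_i$, the walk must begin with $e_i$, since the two edges of $L$ at $v_i$ have both endpoints in $V(H_k)$ and therefore never occur in any $\tilde{A}$ during these reductions. Connectedness of the interior then confines the walk to the descendant component $D_{\sigma(i)}$ containing $w_i$, and the three-components hypothesis leaves $v_i$ as the only vertex of $I(L)$ on the boundary of $D_{\sigma(i)}$. The walk therefore leaves $D_{\sigma(i)}$ either back through $v_i$ or through some vertex of $V(H_{k-1})$; the first option would make the walk closed and, by the distinct-endpoint property of correct paths, the resulting sequence would be classified as a correct cycle that produces no arc, so the other end must lie in $V(H_{k-1})$. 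Thus each of the six arc-ends at $I(L)$ belongs to a distinct arc of $C_{I(L)}$, giving $|A(C_{I(L)})| = 6$ and the cut-obstacle inequality $2 = |E(C_{I(L)})| < 12 = 2\,|A(C_{I(L)})|$. I expect the main technical hurdle to be the careful inductive formalisation of the walk description; the count itself is then immediate.
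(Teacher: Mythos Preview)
The paper states this observation without proof, treating it as immediate; your argument is correct and supplies precisely the details the authors leave implicit. The no-pass-through lemma (forced by the mixed-graph degree constraint on the output of a correct reduction), the inductive walk description of each arc in $A^{(k)}$, and the confinement of each such walk to a single component $D_{\sigma(i)}$ of the descendant are all sound, and together give $|E(C_{I(L)})|=2<12=2\,|A(C_{I(L)})|$.

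One minor imprecision: when ruling out the loop case you write that the closed walk ``would be classified as a correct cycle that produces no arc,'' but the multi-step composed walk is not itself an element of any $\mathcal{S}$. The cleaner argument is to trace a hypothetical loop $a$ at $v_i$ back to the single reduction step at which $a=\vec{e}_P$ was created; then $P$ would be a correct path with both endpoints equal to $v_i$, contradicting the standard distinct-endpoint convention for paths, and reclassifying such a $P$ as a cycle is itself blocked by your own no-pass-through lemma (a vertex of $V\setminus U$ cannot be internal to a correct cycle without violating the output degree count). Either way your conclusion stands.
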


In other words, Observation~\ref{obs:cuto1} says that, for a 3-ear, 
having at most 2 connected components in its descendant is fundamental in order to not creating
trivial cut-obstacles. Therefore, in a robust ear decomposition (Definition~\ref{def.wr}), there are no trivial cut-obstacles
at any 3-ear. The moreover part of Definition~\ref{def.wr} is imposed by a similar technical reasons:
roughly, this condition avoids trivial cut-obstacles at 3-ears 
that belongs to the ear decompositions of the trigraphs that model cubic bridgeless graphs.

\bigskip

In the following, we define ears decompositions that admit 
{\ccr} that does not create cut-obstacles. In the subsequent observation we illustrate it for the planar trigraphs.

\begin{definition}[Superb]\label{def.superb}
Let $H$ be a trigraph. We say that an ear decomposition $(H_0, H_i, L_i)^n$ of $H$
is \emph{superb} if there exists a {\ccr} of $I(L_n), I(L_{n-1}), \ldots, I(L_{j})$ 
for some $j \in [n]$ such that the following properties hold:
\begin{itemize}
 \item[(i)]  $j=1$ or $I(L_{j-1})$ cannot be correctly reduced, and
 \item[(ii)] a cut-obstacle at the internal vertices of a 3-ear in $\{L_{j-1}, L_{j}, \ldots, L_{n}\}$ is never created.
\end{itemize}
\end{definition}

\begin{observation}
\label{o.av}
Every robust ear decomposition of a planar trigraph $H$ is superb.
\end{observation}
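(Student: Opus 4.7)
The plan is to use the planarity of $H$ to exhibit a directed cycle double cover $\mathcal{D}$ and then verify that the \ccr\ guided by $\mathcal{D}$ satisfies the two clauses of Definition~\ref{def.superb}, halting the \ccr\ early if a reduction would otherwise create a cut-obstacle.

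First I would fix a planar embedding of $H$. Since $H$ is cubic and $2$-edge-connected---each ear addition preserves $2$-edge-connectivity starting from the initial cycle---orienting every face counterclockwise yields a \dcdc\ $\mathcal{D}$ of $H$. The cycles of $\mathcal{D}$ are precisely the facial cycles and they dictate, at every cubic vertex of $H$, the pairing of its three incident edges into the three facial edge-pairs.

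Next I would define the candidate \ccr\ by processing the ears in reverse order: at each step, the correct paths and cycles in the reduction of $I(L_i)$ are taken to be the maximal segments of the cycles of $\mathcal{D}$ meeting $A(I(L_i)) \cup \tilde{A}$. Correctness of each partition is verified inductively by two observations: the forbidden-pair set $R$ is built precisely to keep distinct cycles of $\mathcal{D}$ apart, so segments of a common face of $\mathcal{D}$ never form a forbidden pair; and no $\tilde{A}$-only $2$-cycle can arise, because two distinct faces of a simple planar embedding cannot share two parallel edges at a vertex being reduced. Whenever the partition dictated by $\mathcal{D}$ fails to be a correct reduction, the \ccr\ halts there, invoking clause~(i) of Definition~\ref{def.superb}.

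The heart of the proof is to verify that no cut-obstacle at a $3$-ear of $\{L_{j-1}, \ldots, L_n\}$ is ever created. Fix such a $3$-ear $L_i$; initially $C_{I(L_i)}$ contains two ear-boundary edges and three pendant edges. By robustness, the descendants of $I(L_i)$ form either a single connected component or the exceptional two-component configuration of Definition~\ref{def.wr}. In the one-component case, planarity forces the cycles of $\mathcal{D}$ that enter the descendant through a pendant to exit back through a pendant whenever the descendant has no outbound edges to $V(H_{i-1})$; this pairs the six pendant half-arcs into three internal output arcs, yielding zero arcs in $C_{I(L_i)}$ against the two surviving boundary edges, and a refined planar count treats the cases with outbound edges. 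In the two-component case, where one component is an isolated vertex $w$ adjacent to two vertices of $V(H_0)$, a straightforward reduction of $w$ would contribute two arcs to $C_{I(L_i)}$ and, once the other component is also reduced, violate the inequality $|\text{edges in }C_{I(L_i)}| \geq 2\,|\text{arcs in }C_{I(L_i)}|$; here the plan is to halt the \ccr\ at some ear reached before $w$ in the reverse order, using that the growth of $R$ along the \ccr\ eventually blocks a correct reduction before $w$ is touched.

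The main obstacle I expect is precisely the two-component case: I must establish that, in a planar trigraph, either the geometry around the isolated vertex $w$ makes the accompanying descendant reductions pair the other pendants in a way that compensates the two arcs contributed by $w$, or the \ccr\ naturally halts on an ear encountered strictly between $L_n$ and $w$'s ear because the accumulated forbidden pairs become incompatible with the partition dictated by $\mathcal{D}$. Linking the local planar position of $w$ to the global growth of $R$---and thereby guaranteeing an early halt when needed---is in my view the most delicate part of the argument.
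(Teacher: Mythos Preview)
Your overall plan---guide the \ccr\ by the facial cycles of a planar embedding---is exactly the paper's, but your handling of the two-component case of Definition~\ref{def.wr} has a genuine gap. You treat that case as an obstruction and propose to \emph{halt} the \ccr\ before the isolated descendant vertex $w$ is reduced, hoping that the accumulated forbidden pairs in $R$ block some earlier reduction. This cannot work. Clause~(i) of Definition~\ref{def.superb} only permits halting at $L_{j-1}$ when $I(L_{j-1})$ admits \emph{no} correct reduction, and along the facial-cycle \ccr\ every step is a correct reduction, so the halt you need is never available. Even if it were, clause~(ii) would still demand that no cut-obstacle be created at any 3-ear among $L_{j-1},\dots,L_n$, which is precisely the statement you are trying to sidestep; your appeal to ``the growth of $R$'' is speculation, not an argument.

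The idea you are missing is that the two-component case is not problematic at all. Write $L_i=\alpha v^1v^2v^3\beta$ and let $x,y,z$ be the descendant-neighbours of $v^1,v^2,v^3$. Robustness forces the isolated component to be the middle pendant $y$ and places $x,z$ in a common component $D'$; planarity together with $2$-edge-connectedness then puts $y$ on the opposite side of the ear from $x$ and $z$, so the path $x\,v^1v^2v^3\,z$ lies on a single facial cycle $C'$ with $V(C')\setminus\{v^1,v^2,v^3\}\subseteq V(D')$. Reducing the descendant therefore already produces an arc with both endpoints in $I(L_i)$, and no cut-obstacle arises. The one-component case is handled by the same device---exhibit one facial cycle through two adjacent pendants whose remaining vertices lie in the descendant---rather than by the stronger ``pair all six half-arcs internally'' claim you sketch, which is neither needed nor generally true.
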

\begin{proof}
Let us consider an embedding of $H$ in the plane and let $\mathcal{C}$ be the set of facial cycles defined
by this embedding. The set $\mathcal{C}$ is a {\dcdc} of $H$: we can prescribe clockwise orientation
to the facial cycles of interior faces and anticlockwise orientation to the facial cycle of the external face.
Let $(H_0, H_i, L_i)^n$ be a robust ear decomposition of $H$. 
Observe that $\mathcal{C}$ encodes a {\ccr}  of $I(L_n), \ldots, I(L_{1}), V(H_0)$. 
We claim that such {\ccr} does not create cut-obstacles at any 3-ear.
The rest of the proof is devoted to prove this.

Let $L_i$ be a 3-ear, $i \in [n]$ and $V(L_i)=\alpha v^1 v^2 v^3 \beta$.
We assume that $I(L_{i+1}), \ldots, I(L_{n})$ has been already correctly reduced according to $\mathcal{C}$
and let $H'$ denote the obtained mixed graph.

Suppose first that there are adjacent internal vertices of $L_i$, without loss of generality $v^1$ and $v^2$,
such that if $x$ (resp. $y$)
is the neighbour of $v^1$ (resp. of $v^2$) that is not in $V(L_i)$, then 
$x$ and $y$ are in the same component, say $D$, of the descendant of $I(L_i)$.
By planarity and since $H$ is cubic, it holds that the path $xv^1v^2y$ is a subgraph of a facial cycle $C$ of $\mathcal{C}$.
Again by planarity and using the fact that $V(D)$ is connected, we have that all vertices in $V(C)\setminus\{v^1, v^2\}$ belong to $V(D)$.
If so, all vertices in $V(C)\setminus\{v^1, v^2\}$ are internal vertices
of ears that were already reduced and thus, in $H'$, there is an arc with end vertices $v^1, v^2$.

We now suppose that there are no such internal vertices. Since $(H_0, H_i, L_i)^n$ is weakly robust,
it implies that $x, z$, where $z$ is the neighbour of $v^3$ that is not in $V(L_i)$ are in the same 
component, say $D'$, of the descendant of $I(L_i)$. Moreover, the component of the descendant of $I(L_i)$ to which
$y$ belongs contains only $y$  (thus, $y$ in an internal vertex of a star in $L_{i+1}, \ldots, L_{n}$)
and in the planar embedding, $y$ is not in the same side (with respect to $L_i$) where $x$ and $z$ are
since $(H_0, H_i, L_i)^n$ is weakly robust and thus $H$ is 2-edge-connected.
As before, it follows that the path $xv^1v^2v^3z$ belongs to a path that is a subgraph of a facial cycle $C'$ of $\mathcal{C}$
and all vertices in $V(C')\setminus\{v^1, v^2, v^3\}$ are in $V(D)$.
We conclude that in $H'$ there is an arc with end vertices $v^1, v^3$.
\end{proof}

\begin{remark}
In Conjecture~\ref{conj:weakavoidance} we propose that a weakening of Observation~\ref{o.av} holds for every 
robust trigraph. The weakening is twofold: we allow changing the ear decomposition and also the trigraph itself. 
This is explained next. 
\end{remark}

\begin{definition}[Admits $H_0, S$]
\label{def.S}
Let $H$ be a trigraph, $H_0$ be an induced cycle of $H$ and let  $(H_0, H_i, L_i)^n$
be an ear decomposition of $H$. 
\begin{itemize}
\item
We refer to a 2,3-ear as a \emph {base} if its leaves are in $H_0$, as an \emph {up} if one leaf belongs to $H_0$ and the other one to a base, as an \emph{antenna} if exactly one leaf belongs to an up.
\item
Let $S$ be a set of disjoint paths of $H$ on 3 vertices. We say that $(H_0, H_i, L_i)^n$ 
\emph{admits $H_0, S$} if the following three conditions hold.
(1) If $L_i$ is a 3-ear, then $L_i$ is either a base, or an up or, an antenna containing an element of $S$.
(2)~Each element of $S$ is a subset of an antenna.
(3)~No 1-ear has both leaves in $H_0$ or one leaf in a base and second one in an up.
\end{itemize}
\end{definition}

The following notation helps with the next definition. Given an ear decomposition $(H_0, H_i, L_i)^n$
of a trigraph, we say that a sequence $(L_{i_j})_{j \in [l]}$ of ears is a \emph{heel}
if $L_{i_1}$ is a 2-ear, for every $j \in \{2,\ldots,l\}$
the leaves of $L_{i_j}$ are exactly the internal vertices of $L_{i_{j-1}}$
and $(L_{i_j})_{j \in [l]}$ is maximal.
Note that $L_{i_{j}}$ is either a 2-ear, or a 1-ear.

\begin{definition}[Local exchange. Closure]
\label{def.closure}
Let $H$ be a trigraph, $H_0$ be an induced cycle of $H$ and let $S$ be a set of disjoint paths of $H$ on 3 vertices. 
Let $\mathcal{H}=(H_0, H_i, L_i)^n$  be an ear decomposition of $H$ that admits $H_0, S$. 
\begin{itemize}
\item A {\em local exchange} on the pair $H, \mathcal{H}$
is an operation that produces a pair $H', (H_0, H'_i, L'_i)^n$ of a trigraph with its ear decomposition (see Figure~\ref{fig:localexchange}) as follows:
Let $L_{i_0}\in \{L_1,\ldots, L_n\}$ be a 3-ear antenna with vertices $a, w_1, w_2, w_3, b$. 
Let $(L_{i_j})_{j \in [l]}$ be a heel such that the leaves of $L_{i_1}$ are $w_2, w_3$,
and let $u$ be the internal vertex of $L_{i_l}$ that has the shortest distance (in the heel) to $w_3$. 
Further let $L_k, L_m$ be ears such that $w_1$ is a leaf of $L_k$ and $u$ is a leaf of $L_m$
and assume $\{w_1,w'_1\}$, $\{u,u'\}$ are edges of $L_k$, $L_m$, respectively. 
Then, $H', L'_k, L'_m$ are obtained by deleting the edges $\{w_1,w'_1\}$, $\{u,u'\}$ 
and adding the new edges $\{w_1,u'\}$, $\{u,w'_1\}$. The other ears do not change, namely $L'_i=L_i$ for all $i \notin \{k,m\}$.
Note that $(H_0, H'_i, L'_i)^n$ admits $H_0, S$.
\item
The \emph{closure} of the pair $H$, $\mathcal{H}$ is the set of all pairs $H'$,
$\mathcal{H}'$, where $H'$ is a trigraph and $\mathcal{H}'$ is an ear decomposition of $H'$ admitting $H_0, S$,
such that $H'$, $\mathcal{H}'$ are  obtained from $H$, $\mathcal{H}$ by a sequence of the following two operations: 
(1) a modification of the current ear decomposition to another one admitting $H_0, S$,  and (2) a local exchange on the current trigraph.
\end{itemize}
\end{definition}

\begin{figure}[h]
 \centering
 \ifpdf\input{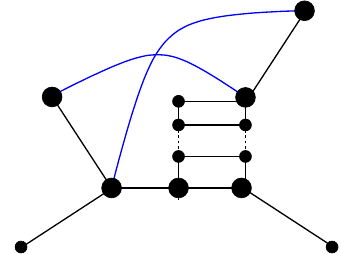_t}\else\input{localexchange.ps_t}\fi
 \caption{Illustration of a local exchange: edges $\{w_1,w'_1\}$, $\{u,u'\}$ are deleted, and new edges $\{w_1,u'\}$, $\{u,w'_1\}$ (the ones in blue)
 are added.
 The case depicted corresponds to the one that the last ear of the heel, namely $L_{i_l}$, is a 2-ear.}
 \label{fig:localexchange}
\end{figure}

Note that in definition of closure (Definition~\ref{def.closure}), a modification changes only ear decomposition, while 
in a local exchange both, the ear decomposition and the trigraph change.

\subsection{Main Contribution}
\label{sub.mainresults}

Aside of cut-obstacles, there are many other obstacles for the existence of a correct reduction. The main contribution of this paper is a proposition which claims that \emph {avoidance of the cut-obstacles in a very restricted setting} is sufficient for proving the {\dcdc} conjecture.

\begin{conjecture}[Cut avoidance conjecture] 
\label{conj:weakavoidance}
Let $H$ be a trigraph, $H_0$ be an induced cycle of $H$ and let $S$ be a set of disjoint paths on 3 vertices. 
Let $\mathcal{H}$ be a robust ear decomposition of $H$ that admits $H_0, S$.
Then, there is a pair $H'$, $\mathcal{H}'$ in the closure of $H$, $\mathcal{H}$,
such that $\mathcal{H}'$  is superb.
\end{conjecture}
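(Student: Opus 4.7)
My approach would be induction on the number $n$ of ears in the decomposition, attempting to construct the \ccr{} ear by ear from $L_n$ down toward $H_0$. For ears that are not 3-ears (stars, 2-ears, 1-ears) there is no cut-obstacle to worry about, so the work is concentrated at 3-ears. At a 3-ear $L_i$ with internal vertices $v^1 v^2 v^3$, the question is which of the six links from $\{v^1,v^2,v^3\}$ to the rest of the current mixed graph are edges and which are arcs installed by earlier reductions. Because $\mathcal{H}$ is robust, the descendant of $I(L_i)$ has at most two components and in the two-component case one of them is an isolated vertex with neighbours on $H_0$; this severely restricts the combinatorial configurations that need to be analysed, and mirrors the case analysis already used in the proof of Observation~\ref{o.av}.

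My plan is then to enumerate the possible arc/edge configurations at $I(L_i)$ that would force a cut-obstacle, and show that each one can be resolved by an operation in the closure. In many cases an operation of type (1) --- a modification of the ear decomposition among those admitting $H_0,S$ --- already suffices: re-choosing which of several candidate bases/ups/antennae is reduced first changes which downstream edges become arcs and can restore the balance in Definition~\ref{def.cuto}. When no such reshuffling works, we invoke a local exchange along the heel hanging off the antenna housing $L_i$. The key feature of Definition~\ref{def.closure} is that swapping the edges $\{w_1,w_1'\}, \{u,u'\}$ for $\{w_1,u'\}, \{u,w_1'\}$ rewires exactly which correct paths get glued together once the heel is reduced, and therefore changes precisely which arcs become incident to $v^1,v^2,v^3$ --- without destroying robustness, cubicity or the \emph{admits $H_0,S$} property, each of which should be checked directly from the definitions.

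The main obstacle, and the reason the authors leave this as a conjecture, is termination and the nonlocal interaction between distant 3-ears: a local exchange that repairs the cut-obstacle at one antenna may create a fresh one at a different antenna, up or base, since the forbidden-pair relation $R$ is tracked globally through items (iv) and (v) of the definition of correct reduction. To close the induction one would have to design a potential function on pairs in the closure of $H,\mathcal{H}$ that strictly decreases under every local exchange and does not increase under modifications, while remaining bounded below. A natural first attempt is a lexicographic pair consisting of the smallest index $j$ at which the \ccr{} currently gets stuck, together with a secondary measure such as the total number of arcs in $C_{I(L_j)}$, or the depth of the offending antenna in its heel; one would then verify that the operations of Definition~\ref{def.closure} can always be chosen so that this pair strictly drops. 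Establishing such a monovariant --- and in particular ruling out cyclic sequences of exchanges that merely shift the obstruction around the trigraph --- is where I expect the real difficulty to lie, and it is the step I would attack last, after the local case analysis at 3-ears has been completed.
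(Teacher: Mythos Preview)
This statement is Conjecture~\ref{conj:weakavoidance} in the paper, and the paper does \emph{not} prove it; it is left open. The paper's contribution is Theorem~\ref{th:mainmain}, which shows that \emph{if} this conjecture holds then the directed cycle double cover conjecture holds. So there is no proof in the paper to compare your proposal against.

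What you have written is not a proof but a plan of attack, and you yourself identify the gap: you have no termination argument. Your local analysis at a single 3-ear is plausible and indeed parallels the reasoning in Observation~\ref{o.av}, but the crux --- showing that some sequence of modifications and local exchanges in the closure eventually reaches a superb decomposition --- is exactly the content of the conjecture, and you have only named a candidate potential function without verifying any of its required properties. In particular, a local exchange alters the trigraph $H$ itself, so the set of forbidden pairs $R$ produced by later reductions can change in ways that are not controlled by the index $j$ at which the current \ccr{} gets stuck; your proposed lexicographic invariant could easily increase under such an exchange. Until a monovariant is actually exhibited and shown to strictly decrease, the argument is incomplete, and this is precisely why the authors state the result as a conjecture rather than a theorem.
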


The main result of this paper is the following.

\begin{theorem}\label{th:mainmain}
If Conjecture~\ref{conj:weakavoidance} holds, then 
the {\dcdc} conjecture holds in general graphs.
\end{theorem}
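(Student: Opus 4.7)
The overall strategy is a chain of reductions that funnels an arbitrary bridgeless graph into a trigraph context where Conjecture~\ref{conj:weakavoidance} can be applied, and then transports the resulting dcdc back. First, I would invoke the standard reduction of the dcdc conjecture to cubic $2$-edge-connected graphs: a dcdc is easily lifted through suppressions of degree-$2$ vertices and through local "cubifications" of higher-degree vertices, so it suffices to treat a cubic bridgeless $G$. The next step is the core construction: associate to $G$ a trigraph $H$ together with an induced cycle $H_0$, a robust ear decomposition $\mathcal{H}=(H_0,H_i,L_i)^n$, and a set $S$ of disjoint $3$-paths so that $\mathcal{H}$ admits $H_0, S$ in the sense of Definition~\ref{def.S}, and so that a dcdc of $H$ descends to a dcdc of $G$. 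I would produce $\mathcal{H}$ by starting from any ear decomposition of $G$ based at a suitable induced cycle $H_0$ and then chopping each long ear into short ears (of sizes $2$--$5$, with stars when vertex degrees force it), placing the paths of $S$ on the antenna positions created by this chopping. The base/up/antenna classification of Definition~\ref{def.S} precisely matches the nested short-ear structure produced by this refinement, and the robustness (Definition~\ref{def.wr}) can be arranged by controlling how ears are attached at each level.

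With $(H, \mathcal{H}, H_0, S)$ in hand, Conjecture~\ref{conj:weakavoidance} furnishes a pair $(H', \mathcal{H}')$ in the closure of $(H, \mathcal{H})$ with $\mathcal{H}'$ superb. By Definition~\ref{def.superb}, the superbness delivers a ccr of $I(L'_n), \ldots, I(L'_j)$ that never produces a cut-obstacle at any $3$-ear. The key technical step is to argue that in our setting one can in fact take $j=1$, so that the ccr exhausts the entire ear decomposition. For this I would show that (i) at a $3$-ear, the only genuine obstruction to a correct reduction is a cut-obstacle—so avoiding it, as superbness does, already yields reducibility—and (ii) for every other short ear (a $2$-ear, $4$-ear, $5$-ear, or $4$-vertex star), the robust short-ear structure leaves enough room to partition the incident arcs and reroute edges into correct paths and cycles, regardless of the cumulative forbidden pairs inherited from previous reductions. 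Combining (i) and (ii), the ccr reaches $H_0$, and then Proposition~\ref{prop:consecorr} produces a dcdc of $H'$.

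Finally, I would transport the dcdc of $H'$ back to a dcdc of $G$. The closure is generated by two operations (Definition~\ref{def.closure}): modifications of the ear decomposition (which do not change the underlying graph) and local exchanges (which delete two edges and add two edges in a $3$-ear-antenna-plus-heel configuration). Each local exchange is a bounded-size local swap, so the two cycles of the dcdc passing through the swapped edges can be rerouted through the original edge pair without breaking the double-cover property; iterating these inverse local exchanges along the derivation that produced $(H', \mathcal{H}')$ from $(H, \mathcal{H})$ pulls the dcdc back to $H$, and hence to $G$ by our initial encoding.

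The main obstacle I anticipate is step~(i) of the second paragraph: arguing that cut-obstacles at $3$-ears are the \emph{only} real obstruction, so that the superb decomposition's ccr actually reaches $j=1$ rather than halting at some intermediate $L'_{j-1}$. A secondary difficulty lies in step~(ii)—verifying that non-$3$-ear reductions never fail under robustness—and in the transfer paragraph, where one must carefully check that a single local exchange admits a canonical inverse on dcdcs, so that iterating inverses is well-defined. If step~(i) genuinely fails, one would need an additional inductive layer or a finer analysis of the shape of forbidden-pair configurations inherited at $I(L'_{j-1})$.
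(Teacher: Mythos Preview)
Your proposal misses the central construction of the paper and, as a result, your key step~(i) is false as stated. You propose to build $H$ from $G$ by ``chopping each long ear into short ears'' and declaring $S$ on the antenna positions. But the paper's $H(G)$ is not a refinement of an ear decomposition of $G$: it is a much larger trigraph in which every internal vertex of a path ear of $G$ is replaced by an entire \emph{basic gadget} --- a seven-ear block $E_1,E_2,E_3',E_4,D_1,D_2,D_3$ attached to $H_0$ (Definition~\ref{def:basicgadget} and the construction in Section~\ref{sec:main}). This gadget is the engine of the whole proof. Theorem~\ref{thm.main2} shows that for a generic $3$-ear there is a second obstruction besides cut-obstacles, the \emph{inner obstacle} of Definition~\ref{def.inner}; so your claim that ``at a $3$-ear, the only genuine obstruction to a correct reduction is a cut-obstacle'' is simply wrong for an arbitrary trigraph. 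The gadget is engineered precisely so that inner obstacles cannot arise (second half of the proof of Theorem~\ref{theo:important}), and the resulting dichotomy --- complete reduction or cut-obstacle --- is the content of Theorems~\ref{theo:important}, \ref{theo:existence}, \ref{theo:bstargad}, \ref{theo:double} and Lemma~\ref{lemma:m11}. Without the gadget, superbness gives you no control over $j$.

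Your transport step is also off. The paper never pulls a {\dcdc} of $H'$ back through inverse local exchanges; there is no reason a {\dcdc} survives a local exchange (it is a genuine $2$-swap of edges, not a subdivision). Instead, Observation~\ref{o.cl} shows that contracting every basic gadget of $H'$ recovers $G'$, and Lemma~\ref{lemma:main} (via Claim~\ref{cl:cons}) proves that a complete {\ccr} of a \emph{relevant} ear decomposition of $H'$ --- one partitioned into gadget blocks --- induces a {\ccr} of the corresponding vertex partition of $G'$, hence a {\dcdc} of $G$ by Proposition~\ref{prop:consecorr}. An additional lemma (Lemma~\ref{l.last}) is needed to pass from an arbitrary superb decomposition in the closure to a relevant one. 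In short: the missing idea is the basic-gadget construction of $H(G)$ and its block analysis; both your reducibility argument and your descent to $G$ depend on it.
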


The rest of the paper is organized as follows.
In Section~\ref{sec:main} we describe the essential step in the proof
of Theorem~\ref{th:mainmain}. 
Firstly, for each cubic graph~$G$ and its ear decomposition, 
we construct a trigraph $H(G)$ with specified induced cycle $H_0= H_0(G)$ and set $S= S(G)$. 
Simultaneously, we construct ear decomposition $(H_0, H_i, L_i)^n$ of $H(G)$ that admits $H_0, S$. 
The ear decomposition $(H_0, H_i, L_i)^n$ will be robust if the ear decomposition of $G$ used in the construction
of  $H(G)$ is {\em super robust}. 
In Section~\ref{sec:3conn} of this paper we define super robust ear decompositions and
prove that every 3-edge-connected cubic graph admits a super robust ear decomposition. 
We recall that the {\dcdc} conjecture is as hard for 3-edge-connected cubic graphs as for general graphs.

In Section~\ref{sec:main1}, we introduce the concept of \emph{relevant} ear decomposition
and show that superb relevant ear decompositions encode 
directed cycle double covers of~$G$. 
Then (in Lemma~\ref{l.last}) we extend this claim to each ear decomposition in the closure of $(H_0, H_i, L_i)^n$,
thus proving Theorem~\ref{th:mainmain}.

\begin{remark} In this remark we explain the reason behind the definition of the {local exchange} operation.
Assume that 3-edge-connected cubic graph $G$ has a {\dcdc}. 
Does then the constructed trigraph $H(G)$ and its constructed ear-decomposition $\mathcal{H}$ satisfy Conjecture~\ref{conj:weakavoidance}? 
A {\dcdc} of $G$ defines an embedding of $G$ in an orientable surface with no dual loop.
Such embedding gives rise to a special ear-decomposition, as in the toroidal example of Section~\ref{sec.surfaces}.
Let $H', \mathcal{H}'$ be the trigraph and its ear decomposition constructed from this special ear-decomposition. 
Possibly $H$ is not isomorphic to $H'$, but we believe that  $H', \mathcal{H}'$  is in the closure of  $H, \mathcal{H}$. 
We further conjecture that $\mathcal{H}'$  is superb, which is illustrated by the toroidal example of Section~\ref{sec.surfaces}.
\end{remark}

\section{Trigraph $H(G)$}
\label{sec:main}

Let $G$ be a cubic graph and $(G_0, G_i, P_i)^{k}$ be an 
ear decomposition of $G$; we recall that, an ear is a path on at least three vertices or a star on four vertices.  
The aim of this section is to construct the trigraph $H(G)$ along with an ear decomposition. 

Let $v_0$ be a fixed vertex of $G_0$. Let $v_1$ and $v_2$ denote the neighbours of $v_0$ in $G_0$.
We obtain a cubic graph $G'$ from $G$ by subdividing edges $\{v_0,v_1\}$ and $\{v_0,v_2\}$ into
$\{v_0,x_0\}$, $\{x_0,v_1\}$ and $\{v_0,y_0\}$, $\{y_0,v_2\}$, respectively, and adding the new edge $\{x_0,y_0\}$; this operation is 
known as a Y-$\Delta$ operation.
The cubic graph~$G'$ admits an ear decomposition starting at the triangle on vertex set 
$\{x_0,y_0,v_0\}$, and with building ears $P_0, P_1, \ldots, P_k$, where 
  $P_0$ is the path obtained from $G_0$ by deleting $v_0$ and 
  adding the edges $\{x_0,v_1\}$, $\{y_0,v_2\}$.
Clearly, $G$ has a {\dcdc} if and only if $G'$ does so. 
In the rest of this section, for each  $i \in \{0,1,\ldots,k\}$, 
the notation $a_i$, $c_i$ stand for the end vertices of $P_i$, whenever $P_i$ is a path; in particular
$\{a_i, c_i\}=\{x_0, y_0\}$ is the set of end vertices of $P_0$.

Let $H(G)$ be a cycle on $n(G)$ vertices, with $n(G)$ as described in Remark~\ref{o.cl1}.
Set $H_0$ as the starting cycle of the ear decomposition of~$H(G)$ and
choose 3 distinct vertices from $V(H_0)$; we denote the set of these vertices by $V_0$.

The following building block comes in handy to describe the construction~of~$H(G)$.

\begin{definition}[Basic gadget]\label{def:basicgadget}
A \emph{basic gadget} $\mathcal{B}=\mathcal{B}(x,u)$ is a sequence $E_1, E_2, E'_3, E_4, D_1, D_2, D_3$ of short
ears which, considering the vertices named according to Figure~\ref{fig:bagadget}, are defined as follows:  
\begin{itemize}
 \item[(i)] $I(E_1)=\{a',w',b'\}$, $I(E_2) =\{a,w,b\}$, $V(E'_3)=\{b,z,y,x\}$, $V(E_4)=\{z, u, v, y\}$,
 and the end vertices of $E_1$ and $E_2$ belong to $H_0$,
 \item[(iii)] $D_1, D_2, D_3$ are stars, $\{a', w, v\}$ is the set of leaves of $D_1$, and each star
 $D_2$, $D_3$ has two leaves in $H_0$.
 Moreover, vertex $w'$ is a leaf of $D_2$ and vertex $a$ is a leaf of $D_3$.
\end{itemize}
Depending on the context, a basic gadget may also refer to the 
graph obtained by the union of the ears $E_1, E_2, E'_3, E_4, D_1, D_2, D_3$. 
In addition, we say that the path on vertex set $\{b,z,y\}$ is the {\emph{fixed}} path,
$u$ is the \emph{replica} and $x$ is the \emph{joint} of the basic gadget; 
whenever only the replica vertex $u$ is specified, we denote by $x_u$ the corresponding joint vertex.
\end{definition}

\begin{figure}[h] 
\centering 
\subfigure[Basic gadget $\mathcal B(x,u)$]
{
\ifpdf\input{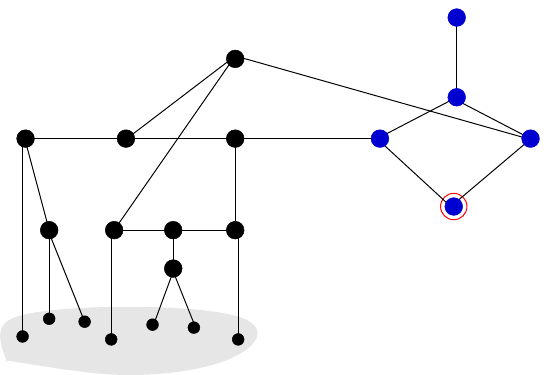_t}\else\input{gadget.ps_t}\fi
\label{fig:bagadget}
}\qquad
\subfigure[Gadget after reduction of its stars.]
{
\ifpdf\input{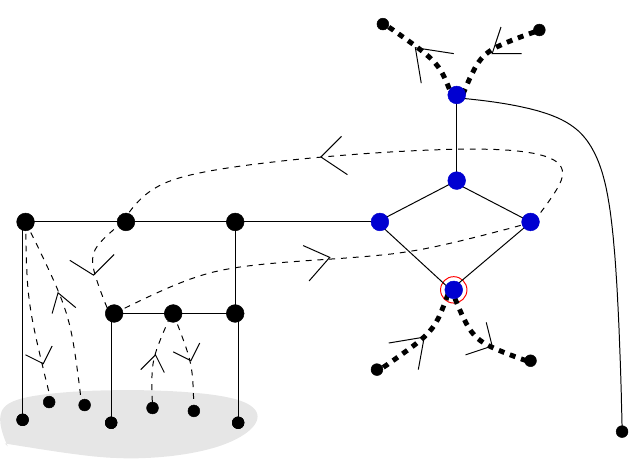_t}\else\input{gadgetgadget.ps_t}\fi
 \label{fig:gadget}}
 \caption{}
 \label{fig:basicandno}
\end{figure}

We now describe the recursive construction of $H(G)$, 
along with a function~$\Gamma$ that maps the set of the vertices and edges of $G'$ into a subset of the vertices and edges of $H(G)$. 
In addition, we define a set $S$ of disjoint paths of $H(G)$ on 3 vertices, 
and an ear decomposition of $H(G)$ that we call {\em canonical}. 

Recall that the ear decomposition of $H(G)$ starts at cycle $H_0$.
Set $V_0 = \{\Gamma(x_0), \Gamma(y_0),  \Gamma(v_0)\}$ and $S_0 = \emptyset$.
According to the following rules, for each $i \in \{0,1,\ldots,k\}$, 
we obtain $H_{t_i}$ from $H_{t_{i-1}}$,
$S_{t_i}$ from $S_{t_{i-1}}$, and the list $\mathcal{L}_{t_{i}}$ 
of short ears of the canonical ear decomposition of $H(G)$ 
that generates $H_{t_i}$ from $H_{t_{i-1}}$;
under this notation, $H_{t_{-1}}=H_0$ and $S=S_{t_k}$. 
\begin{itemize}
    \item Case that $P_i$ is a star with three leaves. Let $s$ denote the center of $P_i$ and let $x, y, z$ denote its leaves.
     Graph $H_{t_i}$ is obtained from $H_{t_{i-1}}$ by adding the new vertex $\Gamma(s)= u$,
     and the new edges $\{u, \Gamma(x)\}$, $\{u, \Gamma(y)\}$ and $\{u, \Gamma(y)\}$. 
     For $w\in \{x,y,z\}$ let $\Gamma(\{s,w\})= \{\Gamma(s), \Gamma(w)\}$. 
     Further, $S_{t_i}=S_{t_{i-1}}$, and
     $\mathcal{L}_{t_{i}}$ contains only one element:      
     the star on vertex set $\{u, \Gamma(x), \Gamma(y), \Gamma(y)\}$.
  
\item Case that $P_i$ is a path. Let  $a_i, b_{i_1}, \ldots, b_{i_l}, c_i$, with $l\geq 1$, be the sequence of vertices of path~$P_i$. 
In order to obtain $H_{t_i}$ from $H_{t_{i-1}}$, we first 
add {\em substantial path} $Q_i$, which is defined by the sequence of vertices $\Gamma(a_i), x_1, \ldots, x_l, \Gamma(c_i)$, where 
$x_1, \ldots, x_l$ are $l$ new vertices. 
Then to each $x_i$, we connect a basic gadget graph $\mathcal B_i=\mathcal B(x_i,u_i)$; 
consequently, each of them is referred to as a \emph{basic gadget of $H(G)$}.
Set $\Gamma(b_{i_j})= u_j$, for each $j\in \{1, \ldots, l\}$. 
Let~$\phi$ denote the natural isomorphism between $P_i$ and $Q_i$
that maps $a_i$ to $\Gamma(a_i)$ and $c_i$ to~$\Gamma(c_i)$.
For each edge $\{u,v\} \in E(P_i)$, let $\Gamma(\{u,v\})= \{\phi(u),\phi(v) \}$.
Note that, under this setting, $\Gamma(\{u,v\})\neq \{\Gamma(u), \Gamma(v)\}$.  
In addition, $S_{t_i}$ is the union of the paths in $S_{t_{i-1}}$
and the $l$ fixed paths of the basic gadgets $\mathcal B_i, 1\leq i\leq l$ (see Definition~\ref{def:basicgadget}). 
Finally, $\mathcal{L}_{t_{i}} = (R_1, \ldots, R_l)$, 
where for each $1\leq j< l$, $R_j$ 
is the list of ears 
\begin{equation} \label{eq:gadget}
E_1(\mathcal B_j),E_2(\mathcal B_j),E_3,E_4(\mathcal B_j),D_1(\mathcal B_j),D_2(\mathcal B_j),D_3(\mathcal B_j), 
\end{equation}
 $\mathcal{B}(x_j,u_j)\cup\{x_j,x_{j-1}\}$, 
where for $j=1$ we let $x_0= \Gamma(a_i)$, and $E_3$ is the path obtained by the union of $E'_3(\mathcal B_j)$ and $\{x_j,x_{j-1}\}$.
The last list $R_l$ consists of the ears \begin{equation} \label{eq:gadget*} E_1(\mathcal B _l),E_2(\mathcal B _l),F,E'_3(\mathcal B _l),E_4(\mathcal B _l),D_1(\mathcal B_l),D_2(\mathcal B_l),D_3(\mathcal B_l),\end{equation}                                                                                                                                                                                                                                                                                                                                                                                                                               
where $F$ is the path on vertex set $\{x_{l-1},x_l, \Gamma(c_i)\}$.
\end{itemize}

Note that $S$ is exactly the set of all fixed paths of the basic gadgets of $H(G)$.
It is a routine to check that the canonical ear decomposition of $H(G)$ admits $H_0,S$. 
Further, if $\mathcal{B}$ is a basic gadget of $H$, then
$E_1(\mathcal B)$ is a base, $E_2(\mathcal B)$ is an up
and, $E_3$ in (\ref{eq:gadget}), $E_3'$ in (\ref{eq:gadget*}) are antennas (see Definition~\ref{def.S}).
The following remark sets $n(G)$; number of vertices of $H_0$.

\begin{remark}\label{o.cl1}
Each path $P_i$ in $\{P_1,\ldots,P_k\}$ gives rise to $|I(P_i)|$ distinct basic gadgets in $H(G)$.
Moreover, in order to construct each basic gadget in $H(G)$, 7 vertices from $H_0$ are needed.
We set,
$$n(G) = 7 \left(\sum_{i \in [k] \,:\, P_i \, \text{path}} |I(P_i)|\right) +3.$$
\end{remark}

Because of Lemmas~\ref{lemma:supimplrob} and~\ref{th:robust} (see Section~\ref{sec:3conn}), 
the canonical ear decomposition is robust provided that graph $G$ is 3-edge-connected
and the chosen ear decomposition $(G_0, G_i, P_i)^{k}$ is  super robust. 

Next observation follows directly from the construction of $H(G)$
and the definition of closure.
Recall that, the cubic graph $G'$ is obtained from $G$ by a $Y-\Delta$ operation, as described
in the beginning of Section~\ref{sec:main}.

\begin{observation}
\label{o.cl}
 Let $\mathcal H$ be the canonical ear decomposition of $H=H(G)$ 
 and let $H', \mathcal{H}'$ be a pair in the closure of $H$, $\mathcal H$.
 By definition, $\mathcal{H}'$ admits $H_0,S$.
 Therefore, each basic gadget of $H$ is a subgraph of $H'$,
 and $G'$ is obtained from $H'$ by contracting the set of all vertices of each basic gadget of $H$ to a single vertex.
\end{observation}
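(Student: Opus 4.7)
The three assertions in Observation~\ref{o.cl} will be verified by induction on the number of closure operations used to pass from $(H,\mathcal{H})$ to $(H',\mathcal{H}')$. The base case $(H',\mathcal{H}')=(H,\mathcal{H})$ is immediate: assertion~(i) is the check performed at the end of the construction of $H(G)$, while (ii) and (iii) hold by construction, since contracting each basic gadget $\mathcal B(x_j,u_j)$ of $H$ to its replica vertex $u_j=\Gamma(b_{i_j})$ and collapsing each substantial path $Q_i$ back onto the ear $P_i$ inverts the recipe that builds $H(G)$ from $G'$.

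For the inductive step, I would assume the three claims hold for some intermediate pair $(H'',\mathcal{H}'')$ and then analyse one closure operation. A modification of the ear decomposition to another decomposition admitting $H_0,S$ (type~(1) of Definition~\ref{def.closure}) leaves the underlying trigraph unchanged, so (ii) and (iii) transfer, while (i) is the defining property of the operation. The substantive case is a local exchange. Here the plan is to verify that the two swapped edges $\{w_1,w'_1\}$ and $\{u,u'\}$ are \emph{not} internal edges of any basic gadget of~$H$. A local exchange acts on a 3-ear antenna $L_{i_0}$ with internal vertices $w_1,w_2,w_3$ and on a heel rooted at a 2-ear $L_{i_1}$ whose leaves are $w_2,w_3$. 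Because $\mathcal{H}''$ admits $H_0,S$, either $L_{i_0}$ contains an element of $S$---which, by construction of $H(G)$, is exactly a fixed path $\{b,z,y\}$ of some basic gadget---or it does not. In either sub-case I would inspect Definition~\ref{def:basicgadget} and check that the edges of $L_k$ and $L_m$ incident to the leaves $w_1$ and $u$ lie between basic gadgets, or between a basic gadget and $H_0$, and never inside a single gadget. The swap then preserves every basic gadget of $H$ as a subgraph of the new trigraph, which establishes (ii); assertion (iii) follows because the contraction of each basic gadget in the new trigraph produces the same quotient cubic graph as in the old one.

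The main obstacle I anticipate is the case analysis embedded in the local-exchange step: one must track, within the rigid structure forced by ``admits $H_0,S$,'' which ears can play the roles of $L_{i_0}$, of the heel, and of the companion ears $L_k,L_m$, and verify in each sub-case that the rewired edges are external to every basic gadget. Once that verification is laid out, the inductive argument closes and Observation~\ref{o.cl} follows, in line with the expectation that it is a direct consequence of the construction of $H(G)$ and the definition of closure.
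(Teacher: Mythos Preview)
Your inductive approach is precisely how one would formalize the paper's one-line justification (``follows directly from the construction of $H(G)$ and the definition of closure''); the paper provides no further argument, so you are already doing more than it does.  Two points deserve tightening.

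First, the case split ``either $L_{i_0}$ contains an element of $S$ \ldots\ or it does not'' has an empty second branch: by condition~(1) of Definition~\ref{def.S} every 3-ear antenna in a decomposition admitting $H_0,S$ must contain an element of $S$, so $L_{i_0}$ always contains a fixed path $b\!-\!z\!-\!y$ of some basic gadget~$\mathcal B$.  Since every neighbour of $b,z,y$ in $H(G)$ lies in $V(\mathcal B)$, all five vertices of $L_{i_0}$---in particular $w_1$---belong to $\mathcal B$; this also forces the heel, and hence the local-exchange vertex $u$, to live in $\mathcal B$.

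Second, and more importantly, your passage from (ii) to (iii) is too quick.  Knowing only that the deleted edges $\{w_1,w'_1\}$ and $\{u,u'\}$ are \emph{not internal} to any basic gadget is insufficient to conclude that the contraction quotient is unchanged: if $w_1$ and $u$ sat in \emph{different} gadgets, the swap $\{w_1,w'_1\},\{u,u'\}\mapsto\{w_1,u'\},\{u,w'_1\}$ would genuinely alter the multigraph obtained after contraction.  What makes (iii) go through is exactly the fact established above, namely that $w_1$ and $u$ both lie in the \emph{same} basic gadget $\mathcal B$; under contraction of $V(\mathcal B)$ the two edges before and after the swap then project to the same pair of quotient edges.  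Once you make this explicit, the induction closes.
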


Observation~\ref{o.cl} implies that the definition of function $\Gamma$ can be extended 
from a canonical ear decomposition to every ear decomposition that belongs to its closure.
This is formalized in the next definition. Note that Definition~\ref{def.ggg} is consistent with the definition of function $\Gamma$ for canonical ear decompositions.

\begin{definition}
\label{def.ggg}
Let $\mathcal H$ be the canonical ear decomposition of $H=H(G)$ 
 and let $H', \mathcal{H}'$ a pair in the closure of $H$, $\mathcal H$.
 We define function $\Gamma$ from the set of vertices and edges of $G'$ to the set of vertices and edges of $H'$ as follows. 
If $e\in E(G')$, then $\Gamma(e)$ is the edge of $H'$ which 
corresponds to $e$ after the vertex-contraction of all the basic gadgets of $H$ in $H'$. 
Let $v\in V(G')$ and let $W$ be the subset of vertices of $H'$ such that contraction of $W$ to a single vertex corresponds to $v$.
If $W$ is a single vertex, say $W= \{w\}$, then $\Gamma(v)= w$. 
Otherwise, $W$ is the vertex set of a basic gadget, and $\Gamma(v)= u$, 
where $u$ is the replica vertex of the basic gadget.  
\end{definition}

\section{Gadget analysis}\label{sec:main1}

In order to analyze the canonical ear decomposition, we extend definition of basic gadget to the one of {\em gadget}.
A gadget is basically the list of ears defined in~(\ref{eq:gadget}), which is a subsequence of the canonical ear decomposition,
and it is always associated to a basic gadget. Recall that basic gadgets are defined in~Definition~\ref{def:basicgadget}.
In the rest of this work, a \emph{block} of a sequence $a_1, \ldots, a_n $ is a subsequence $a_j, \ldots, a_m $
for $1 \leq j\leq m \leq n$. Moreover, the notation $(L_{i_1}, \ldots, L_{i_t})^{-1}$ stands for $I(L_{i_t}), \ldots, I(L_{i_1})$.

\begin{definition}[Gadget]\label{def:gadget}
Let $(H_0, H_i, L_i)^n$  be an ear decomposition of a trigraph
and $\mathcal{B}=\mathcal{B}(x,u)$ be a basic gadget.
A \emph{gadget} $\mathcal{G}$  (see Figure~\ref{fig:gadget}) of $(H_0, H_i, L_i)^n$ associated to $\mathcal{B}$   
is the block of ears $$\mathcal{G}=E_1(\mathcal{B}), E_2(\mathcal{B}), E_3, E_4(\mathcal{B}), D_1(\mathcal{B}), D_2(\mathcal{B}), D_3(\mathcal{B})$$ 
of $L_1, \ldots, L_n$, where  $E_3$ is the 3-ear on edge set $E'_3(\mathcal{B}) \cup \{x',x\}$ with
 $x'$ a neighbour of $x$ not in~$V(E'_3(\mathcal{B}))$.
\end{definition}
We may also say that $\mathcal{B}(x,u)$ is the basic gadget of $\mathcal{G}$.
Naturally, we use the terminology defined for basic gadgets on the gadgets as well.

\begin{definition}[Reduction process]\label{def:redprocess}
A {\em reduction process} of gadget $\mathcal{G}$ is a {\ccr} 
of a subsequence $\mathcal{G}'$ of $\mathcal{G}^{-1}$, so that
\begin{itemize}
 \item[(i)] either $\mathcal{G}' = \mathcal{G}^{-1}$, 
 \item[(ii)] or $\mathcal{G}'= I(D_3), I(D_2), I(D_1), I(E_4), \ldots, I(E_{j})$ for some $2 \leq j \leq 4$ 
such that $I(E_{j-1})$ does not have correct reduction or $I(E_{j-1})$ is a cut-obstacle. 
\end{itemize}
If (i) holds, we say that the reduction process is \emph{complete}.
Otherwise, we refer to it as \emph{$j$-incomplete}.
\end{definition}

\subsubsection*{Correct reductions in 3-ears}
We now mention a result from~\cite{ourwork}  which describes 
the behavior of 3-ears with respect to consecutive correct reductions.
This result is used in the proof of Theorem~\ref{theo:important}.

\begin{definition}[Inner obstacle]\label{def.inner}
An inner obstacle at a 3-ear with internal vertices $\{v^1, v^2, v^3\}$ in a mixed graph $(V,E,A,R)$ is the configuration depicted in Figure~\ref{fig.inner} such that $\{\vec{e},\vec{g}\} \in R$.
\begin{figure}[h]
\centering
 \subfigure[Cut-obstacle at  $\{v^1, v^2, v^3\}$]
 {
 \ifpdf\input{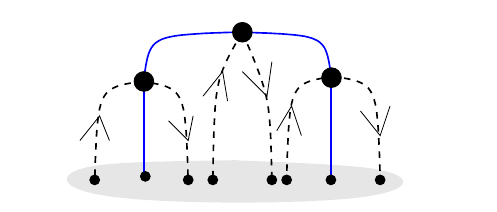_t}\else\input{cut3ear.ps_t}\fi
 \label{fig.cut3ear}
 }\qquad
 \subfigure[Inner Obstacle]
 {
 \ifpdf\input{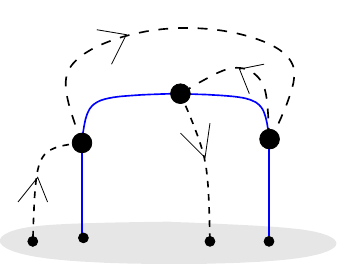_t}\else\input{obs3ears.ps_t}\fi
  \label{fig.inner}
 }
\caption{Potential obstacles to performing correct reductions in a 3-ear.
In the case of the inner obstacle: there exist a correct reduction if and only if $\vec{e}$ and $\vec{g}$ are not forbidden
(that is, $\{\vec{e},\vec{g}\} \notin R$).}
\label{fig:obs3ears}
\end{figure}
\end{definition}

The proof of the following statement can be found in~\cite{ourwork} (Theorem 5).

\begin{theorem}
\label{thm.main2}
Let $H$ be a trigraph and $(H_0, H_i, L_i)^n$ be an ear decomposition of $H$.
Let $i \in [n]$ and $L_i$ be a 3-ear.
Let $H'_i$ denote the mixed graph obtained by a {\ccr} of  $I(L_n), \ldots, I(L_{i+1})$. 
Assume that $I(L_i)$ is not a cut-obstacle in $H'_i$.
Then, $I(L_i)$ does not admit a correct reduction if and only if $I(L_i)$ is an inner obstacle.
\end{theorem}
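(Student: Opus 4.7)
The plan is a case analysis of the arc and edge structure at $I(L_i)$ after the {\ccr} of $I(L_n), \ldots, I(L_{i+1})$. Let $v^1, v^2, v^3$ denote the internal vertices of $L_i$ in order along the ear, with leaves $\alpha, \beta$ so that the ear spells $\alpha v^1 v^2 v^3 \beta$, and let $d_j$ be the unique neighbour of $v^j$ outside $V(L_i)$ in $H$. The {\ccr} has absorbed each descendant edge $\{v^j, d_j\}$ into the arc structure, so in $H'_i$ every $v^j$ carries exactly one in-arc $\vec n_j$ and one out-arc $\vec o_j$ ``from below'', whose far endpoints lie either in $V(H'_i) \setminus I(L_i)$ or back in $I(L_i)$ itself. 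A direct count then shows that for each $j$ the multiset $A(U) \cup \tilde A$ contains exactly three in-arcs and three out-arcs at $v^j$, so a correct reduction of $I(L_i)$ is equivalent to choosing, at each $v^j$, a bijection between its three in-arcs and three out-arcs (a ``pairing'') such that (a) no pairing creates a $2$-cycle composed of two $\tilde A$-arcs and (b) no resulting path or cycle contains a forbidden pair from $R$.

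For the ``only if'' direction, I would show that the inner-obstacle configuration of Figure~\ref{fig.inner} admits no correct pairing. The argument enumerates the six pairings at $v^2$ and eliminates each that would create a $2$-cycle in $\tilde A$; the remaining admissible pairings, when propagated to $v^1$ and $v^3$ under condition (a), are seen to route the arcs $\vec e$ and $\vec g$ into a common directed path, which violates $\{\vec e, \vec g\} \in R$.

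For the ``if'' direction, I would proceed contrapositively, indexing cases by (i) how many of the arcs $\vec n_j, \vec o_j$ have their far endpoint inside $I(L_i)$ and (ii) which pairs among the arcs of $A(U) \cup \tilde A$ belong to $R$. The non-cut-obstacle hypothesis $|C_U \cap E| \geq 2 |C_U \cap A|$ sharply bounds the number of arcs leaving $I(L_i)$, trimming the possibilities. In each case I would propose an explicit valid pairing at $v^2$ and propagate it to $v^1, v^3$. The main obstacle will be the coordination of the three pairings simultaneously: a choice that satisfies (a) and (b) at $v^2$ may force a violation at $v^1$ or $v^3$. I expect to resolve this by a local exchange argument — swapping the in/out assignments at two adjacent vertices along the ear — showing that such a repair is always available unless the configuration matches Figure~\ref{fig.inner}, where the pair $\{\vec e, \vec g\} \in R$ rigidifies the pairings and brings us back to the ``only if'' case.
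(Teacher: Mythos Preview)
The paper does not actually prove Theorem~\ref{thm.main2} here; it quotes the statement and refers the reader to~\cite{ourwork} for the argument. There is therefore no proof in this paper to compare your proposal against.

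That said, your overall plan is the natural one and is almost certainly what the cited proof does: reduce the question to choosing, at each of $v^1,v^2,v^3$, a bijection between incoming and outgoing arcs (the two ear-edges at $v^j$ contributing two $\tilde A$-arcs in each direction), subject to the no-$2$-cycle and forbidden-pair constraints, and then enumerate. Two remarks.

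First, your labels are reversed. Showing that the inner-obstacle configuration admits no correct pairing is the \emph{if} direction of the biconditional (inner obstacle $\Rightarrow$ no correct reduction). The contrapositive you describe next (not an inner obstacle $\Rightarrow$ a correct reduction exists) is the \emph{only if} direction.

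Second, your use of the non-cut-obstacle hypothesis can be pushed further than ``sharply bounds''. Each $v^j$ has arc in-degree and out-degree exactly one in $H'_i$, so among the arcs of $C_{I(L_i)}$ the number with tail in $I(L_i)$ equals the number with head in $I(L_i)$; hence $|C_{I(L_i)}\cap A|$ is even. Since $|C_{I(L_i)}\cap E|=2$, the hypothesis $2\ge 2|C_{I(L_i)}\cap A|$ forces $|C_{I(L_i)}\cap A|=0$: every arc incident with $I(L_i)$ has both endpoints in $I(L_i)$. The arc structure is then one of only a handful of functional digraphs on three vertices (together with the relevant forbidden-pair data), and the enumeration you outline can be carried out explicitly, without recourse to an informal ``local exchange'' repair step.
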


From now on let $(H_0, H_i, L_i)^n$ be the ear decomposition of a trigraph $H$
such that the gadget~$\mathcal{G}$, defined by the sequence $E_1, E_2, E_3, E_4, D_1, D_2, D_3$ 
(see Definition~\ref{def:gadget}), is a block of $L_1, \ldots, L_n$; 
without loss of generality, assume that $D_3=L_m$.
As in the definition of gadget, we consider the vertices of 
$\mathcal{G}$ named according to Figure~\ref{fig:gadget}. 
Moreover, we make the following assumption.

\begin{assumption}\label{assumption1}
There exists a {\ccr} of  $I(L_n), \ldots, I(L_{m+1})$ on $H$. 
\end{assumption}
\noindent
Let $H'$ denote the mixed graph obtained by a {\ccr} of $I(L_n), \ldots, I(L_{m+1})$. Let us recall that $V(H_0), I(L_1), \ldots, I(L_{m})$ is a partition
of the vertex set of $H'$. 
By the definition of gadget, we have that the vertices $x$ and $u$ have degree 2 in $H'$, 
and thus, each of them is the tail of one arc and the head of one arc.
Let us denote such arcs, according to Figure~\ref{fig:gadget}, by $\alpha_1=(x,x_1)$, $\alpha_2=(x_2, x)$
and $\beta_1=(u_1, u),  \beta_2=(u,u_2)$.
All the remaining vertices of $\mathcal{G}$ in $H'$ have degree~3;
recall that we are considering vertices named according to Figure~\ref{fig:gadget}.
Our aim is to prove the following statement.

\begin{theorem}\label{theo:important}
Each reduction process  of $\mathcal{G}$ on $H'$ satisfies exactly one of the following statements.
\begin{itemize} 
 \item[(i)] It is complete and either $\alpha_1=\beta_1$, or $\alpha_2=\beta_2$.
  \item[(ii)]  It is $j$-incomplete and $I(E_{j-1})$ is a cut-obstacle for some $2\leq j \leq 4$. 
\end{itemize}

In addition, if Statement~(i) holds and $\alpha_1 = \beta_1$ (resp. $\alpha_2=\beta_2$), then
$\alpha_2$ and $\beta_2$ (resp. $\alpha_1$ and $\beta_1$) belong to the 
two distinct correct paths (defined by the reduction process) that contain $(x,x')$ and $(x',x)$.
\end{theorem}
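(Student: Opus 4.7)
The plan is to trace the correct-reduction sequence of $\mathcal{G}$ on $H'$ step by step through the ordered list $D_3, D_2, D_1, E_4, E_3, E_2, E_1$, carrying along the resulting mixed graph and the forbidden-pair set $R$ at each step. The three stars $D_3, D_2, D_1$ each have a single degree-three centre, which cannot itself be a cut-obstacle; a short case analysis on the pairings of the three arc-pairs at the centre, combined with the (still sparse) forbidden-pair set inherited from outside $\mathcal{G}$, shows that each star admits a correct reduction and lets me record which new arcs and which new forbidden pairs appear at the neighbours of each centre. Next, the 2-ear $E_4$ is analysed: the arcs $\beta_1, \beta_2$ at $u$ together with the arcs at $v$ inherited from $D_1$'s reduction determine (up to symmetry) the correct reduction of $I(E_4) = \{u,v\}$, and propagate forbidden pairs to the arcs at $z$ and $y$.

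The heart of the argument is the successive reduction of the 3-ears $E_3, E_2, E_1$. For each of them Theorem~\ref{thm.main2} collapses the question to: if $I(E_i)$ is not a cut-obstacle, is it an inner obstacle? I will show that it is not, by a direct examination of where the forbidden pairs can sit at the end vertices of $E_i$. The pairs contributed by the star reductions touch only arcs incident to neighbours of $d_1, d_2, d_3$; the pairs from $E_4$ touch only arcs at $z$ and $y$. Cross-referencing with the inner-obstacle configuration of Definition~\ref{def.inner} shows that such a configuration can arise at $E_i$ only when the edge cut at $I(E_i)$ is already small enough for $I(E_i)$ to be a cut-obstacle. This handles the cases of statement (ii) with $j=4,3,2$ (obstructions at $E_3, E_2, E_1$ respectively).

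When the seven-step reduction completes, the matching claim follows by tracing how the four boundary arcs $\alpha_1, \alpha_2, \beta_1, \beta_2$ are consumed. Each is ultimately spliced onto a correct path exiting the gadget. The topology of the gadget --- essentially the four-cycle $z$-$u$-$v$-$y$ with $b$ and $x$ attached as pendants, decorated by the three stars and the ear $E_1$ --- forces these four arcs to be paired, with exactly two allowed pairings corresponding to $\alpha_1=\beta_1$ (and then $\alpha_2=\beta_2$ on the other side) or the swap. The additional clause about the correct paths containing $(x,x')$ and $(x',x)$ comes from the $E_3$ step, where the edge $\{x,x'\}$ is split into these two arcs; each of them must then be spliced onto one of the two $\alpha$/$\beta$ boundary paths not matched to the internal arc between $x$ and $u$.

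The main obstacle I expect is the inner-obstacle analysis in the 3-ear step: each star reduction admits several valid pairings producing different contributions to $R$ through the sets $R''$ and $\tilde R$ of items (iv)--(v) of the correct-reduction definition, and I must argue uniformly across all these choices that no inner-obstacle pattern arises at $E_3, E_2, E_1$ except in the cut-obstacle regime. A secondary difficulty is making the combinatorial matching argument of paragraph three rigorous; I expect it to reduce to an explicit small case analysis on the correct-path assignments compatible with the gadget structure, identifying the two allowed matchings as the two parities of the correct-path traversal of the four-cycle $z$-$u$-$v$-$y$.
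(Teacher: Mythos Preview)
Your plan for part (ii) --- reducing the question at each 3-ear $E_i$ to ``cut-obstacle or inner obstacle'' via Theorem~\ref{thm.main2} and then ruling out the inner obstacle by inspecting where forbidden pairs can sit --- is exactly what the paper does for the $j$-incomplete case, and that part is fine.

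The genuine gap is in part (i). You treat the conditions $\alpha_1=\beta_1$ and $\alpha_2=\beta_2$ as \emph{pairings produced by the gadget reduction} (``spliced onto'', ``matching'', ``two allowed pairings''). They are not. In $H'$, before any ear of $\mathcal G$ is touched, $x$ and $u$ each already have degree~2, so the arcs $\alpha_1=(x,x_1)$, $\alpha_2=(x_2,x)$, $\beta_1=(u_1,u)$, $\beta_2=(u,u_2)$ are fixed data of $H'$; the equality $\alpha_1=\beta_1$ literally means $x_1=u$ and $u_1=x$, i.e.\ $H'$ already contains an arc from $x$ to $u$. The content of statement~(i) is therefore: \emph{if neither of those two arcs between $x$ and $u$ is present in $H'$, then every reduction process of $\mathcal G$ hits a cut-obstacle at some $I(E_{j-1})$}. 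Your ``topology of the four-cycle $z\,u\,v\,y$'' sketch does not address this direction at all.

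The paper proves it by first isolating a structural constraint (their Observation inside the proof): in any \emph{ccr} of $I(E_4),I(E_3),I(E_2),I(E_1)$ with no cut-obstacle at $E_2$ or $E_1$, the arc $e=(v,w)$ must lie on the correct path through $(b,z)$ and $e'=(a',v)$ on the one through $(z,b)$. This forces a specific routing at $b$ and is what ultimately links the $\alpha$'s to the $\beta$'s. They then enumerate the four correct reductions of $I(E_4)$ and, for each, check compatibility of this constraint with the three subcases $\alpha_1=\beta_1\neq\alpha_2=\beta_2$, $\alpha_2=\beta_2\neq\alpha_1=\beta_1$, and both equalities simultaneously. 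The last subcase is explicitly shown to be \emph{impossible} (it forces $(x,x')$ and $(x',x)$ into the same cycle), so your parenthetical ``(and then $\alpha_2=\beta_2$ on the other side)'' is incorrect. Two of the four $E_4$-reductions are eliminated outright by the observation; the remaining two yield the additional clause about which correct paths carry $\alpha_2,\beta_2$ (resp.\ $\alpha_1,\beta_1$). Your proposal has no analogue of this observation, and without it the case analysis at $E_3$ does not close.
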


\begin{proof}
Let us first suppose that a complete reduction process of $\mathcal{G}$ has been performed on $H'$.
In order to prove the statement of the theorem we need to show that either $\alpha_1=\beta_1$, or $\alpha_2=\beta_2$.
For that, let us examine the single correct reductions involved in the {\ccr}
on $\mathcal{G}$ that witness the existence of the considered complete reduction process. By definition
of complete reduction process, no cut-obstacles at 3-ears are created. 
Without loss of generality, by symmetry of the stars, we can assume that the local 
configuration depicted in Figure~\ref{fig:gadget} is the one generated by the correct reduction of $I(D_3), I(D_2)$ and $I(D_1)$.
Let $\tilde{H}$ denote the obtained mixed graph. Moreover, let $e=(v,w)$ and $e'=(a',v)$ in $\tilde{H}$; 
recall again that we consider vertices named according to Figure~\ref{fig:gadget}.
We claim that the following holds. 

	  \begin{observation}\label{obs:useful}
	  In each {\ccr} of $I(E_4), I(E_3), I(E_2), I(E_1)$ on $\tilde{H}$ 
	  that creates no cut-obstacles at $I(E_3)$, $I(E_2)$ and $I(E_1)$, 
	  the arc $e$ belongs to the correct path that contains $(b,z)$
	  and $e'$ belongs to the correct path that contains $(z,b)$.
	  \end{observation}
	  \begin{proof}[Proof of Observation~\ref{obs:useful}]
	  The hypothesis that no cut-obstacles at $I(E_3)$, $I(E_2)$ and $I(E_1)$ are created, implies that the 
	  {\ccr} of 
	  $I(E_4), I(E_3)$ generates at least one arc with both end vertices in $I(E_2)=\{a,w,b\}$,
	  otherwise we have that $I(E_2)$ is a cut-obstacle. In order to get such an arc, 
	  the {\ccr} of $I(E_4), I(E_3)$ is so that $e$ belongs to the correct path that contains $(b,z)$.
	  Now, for the sake of contradiction, we assume that in a {\ccr} of 
	  $I(E_4), I(E_3)$ which does not create cut-obstacles, the directed edge $e'$ does not belong to the correct path 
	  that contains $(z,b)$. Therefore, without loss of generality,
	  we can assume that the configuration locally depicted in Figure~\ref{fig.case1ii} is obtained
	  by the {\ccr} of $I(E_4), I(E_3)$;
	  up to some different location of $\{u_1,x_1,x'\}$.
	  On the obtained mixed graph there exists a correct reduction of $I(E_2)$ that does not create a cut-obstacle at $I(E_1)$.
	  If so, the correct reduction of $I(E_2)$ is so that
	  the arc $(w,a')$ belongs to the correct path that contains $(b',b)$.
	  Because of the fact that $(b,w)$, $(w,a')$ is a forbidden pair of arcs, the correct path $(b',b, w, a')$ exists
	  and does not contain the arc $(b,w)$. 
	  Moreover, since the pair $(b,w)$, $(u_1,b)$ is also forbidden, there exists the correct path $(u_1,b,b')$.   
	  But then, it holds that the arc $(b,w)$ and the remaining 
	  arc $(w,b)$ (arising from the edge $\{w,b\}$) belong to the same correct path or cycle. 
	  However, it does not correspond to a correct reduction since $(a,w)$ and $(w,a)$ are forced to be in the same cycle.
	  \end{proof}

It is a routine to check that the following list of reductions (encoded by their correct paths and cycles) 
correspond to all possible 4 distinct correct reductions of~$I(E_4)$.

\vspace*{0.2cm}
\begin{tabular}{ll}
\vspace*{0.2cm}
Reduction 1: & $(y,v,w)$, $(a',v,u,z)$, $(z,u,u_2)$ and $(u_1,u,v,y)$. See Figure~\ref{fig.case1}.\\ \vspace*{0.2cm}
Reduction 2: & $(z,u,v,w)$, $(a',v,y)$, $(y,v,u,u_2)$ and $(u_1, u, z)$. See Figure~\ref{fig.case2}.\\\vspace*{0.2cm}
Reduction 3: & $(z,u,v,y)$, $(y,v,w)$, $(u_1,u,z)$ and $(a',v,u,u_2)$. See Figure~\ref{fig.case3}.\\\vspace*{0.2cm}
Reduction 4: & $ (y,v,u,z), (a',v,y), (z,u,u_2)$ and $(u_1,u,v,w)$. See Figure~\ref{fig.case4}\vspace*{0.2cm}
\end{tabular}

\begin{figure}[h]
\centering
\subfigure[Case 1]
{
\ifpdf\input{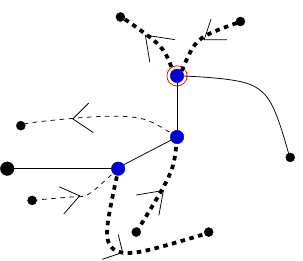_t}\else\input{case1.ps_t}\fi
\label{fig.case1}
}\qquad \qquad
\subfigure[Case 2]
{
\ifpdf\input{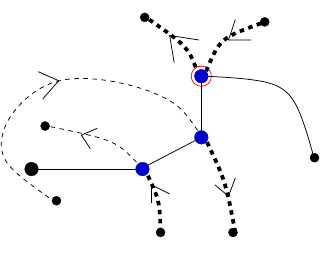_t}\else\input{case2.ps_t}\fi
 \label{fig.case2}}
 
 \vspace*{0.2cm}
 
\subfigure[Case 3]
{
\ifpdf\input{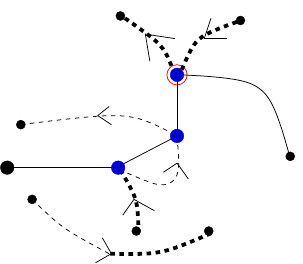_t}\else\input{case3.ps_t}\fi
 \label{fig.case3}}\qquad \qquad
 \subfigure[Case 4]
{
\ifpdf\input{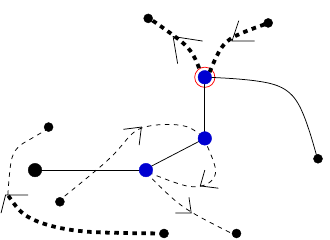_t}\else\input{case4.ps_t}\fi
 \label{fig.case4}}
 \caption{Resulting configurations of all possible correct reductions of $I(E_4)$. 
 The arc incident to $w$ contains $e$ and the arc incident to $a'$ contains $e'$.}
\label{fig.reductionE4}
\end{figure}

Recall that we aim to prove that either $\alpha_1=\beta_1$, or $\alpha_2=\beta_2$. 

We first study reduction 1 (the analysis of reduction 2 follows in an analogous way). 
Since we are examining the steps of a complete reduction process of $\mathcal{G}$ on $H'$,
we have that $I(E_3)$ is not a cut-obstacle; thus, there exists $i\in\{1,2\}$ such that $\alpha_{i} = \beta_i$.
Hence the study of reduction 1 involves the following three cases: (i) $\alpha_{1} = \beta_1$ and 
$\alpha_{2} \neq \beta_2$, (ii) $\alpha_{1} \neq \beta_1$ and $\alpha_{2} = \beta_2$,
and (iii) $\alpha_{1} = \beta_2$ and $\alpha_{1} = \beta_2$. 

By Observation~\ref{obs:useful}, if case~(i) holds, then a correct reduction of $I(E_3)$  
takes the correct path $(b,z,y,w)$ and makes that $e'$ belongs to the correct path that contains $(z,b)$.
Therefore, in a correct reduction of $I(E_3)$ which does not create cut-obstacles the following
holds: the arc $(z,u_2)$ ($\alpha_2$, respectively) is in the correct path that contains $(x',x)$ ($(x,x')$, respectively);
thus the statement of Theorem~\ref{theo:important} follows.

\begin{figure}[h]	
\centering
 \subfigure[A correct reduction of $V(E_3)$ in Case 1(i).]
{
\ifpdf\input{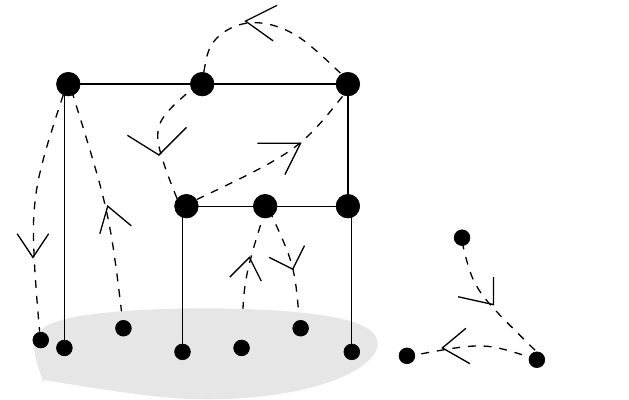_t}\else\input{case1i.ps_t}\fi
\label{fig.case1i}}\qquad
 \subfigure[A correct reduction of $V(E_3)$ in Case 1(ii).]
{
\ifpdf\input{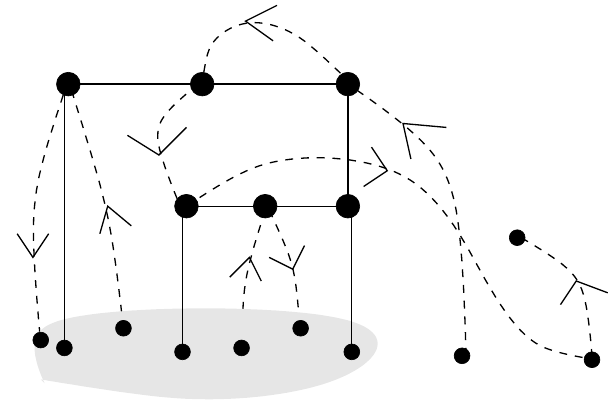_t}\else\input{case1ii.ps_t}\fi
\label{fig.case1ii}}
\caption{}
\end{figure}

Now we study case~(ii). Recall that reduction~1 is depicted in Figure~\ref{fig.case1};  to obtain case (ii) we need to set $(z,x)=(z,u_2)=(x_2,x)$.
Let us suppose that there exists a correct reduction of $I(E_3)$ that does not create cut-obstacles.
By Observation~\ref{obs:useful}, this implies that $e$ and $(b,z)$ belong to the same correct path.
Since the pair $e$, $(z,x)$ is not a forbidden pair of arcs, then there are two 
potential correct paths such that $e$ and $(b,z)$ could belong to (in case that the required correct reduction exists); 
namely $e$ belongs to either 
$(b,z,y,w)$, or to $(b,z,x,y,w)$. 
If the correct reduction takes the correct path $(b,z,y,w)$, then by Observation~\ref{obs:useful}
this correct reduction also takes the correct path $(a',z,b)$. 
We complete the proof using the same argument as for case (i); meaning,
if a correct reduction of $I(E_3)$ exists then $(u_1, y)$ ($\alpha_1$, respectively)
belongs to the correct path that contains $(x,x')$ ($(x',x)$, respectively).
We now suppose that the correct reduction takes the correct path $(b,z,x,y,w)$.
This forces the correct reduction to consider the following correct paths:
$$(u_1,y,z,b), (a',z,y,x,x'), (x',x,x_1).$$
However, this contradicts Observation~\ref{obs:useful}, since $e'$ and $(z,b)$ must belong to the same correct path.

We now move to study case (iii). We can obtain a local sketch of this case if
we set $(x,y)=\alpha_1=(u_1,y)$ and $(z,x)=\alpha_2 =(z,u_2)$,
in Figure~\ref{fig.case1}.
First of all, by Observation~\ref{obs:useful}, the correct path $(a',z,b)$ is created and 
therefore, the correct path $(b,z,y,w)$ exists as well. Hence,
the correct cycle $(y,x,z,y)$ exists. It implies that the arcs $(x,x')$, $(x',x)$ (arising from the edge $\{x,x'\}$) 
are in the same cycle; a contradiction to the definition of correct reduction.

Let us study reductions~3 and~4. By Observation~\ref{obs:useful},
on the one hand, in the case of reduction 3 we have $(a',u_2)= (x_2,x)$ in Figure~\ref{fig.case3}. 
On the other hand, in the case of reduction~4 we must have $(u_1,w)=(x,x_1)$ in Figure~\ref{fig.case4}.
Therefore, reductions 3 and 4 are analogous and it suffices to study one of them.
We study reduction 3. In any correct reduction of $I(E_3)$ that does not create cut-obstacles, there exists only one 
possible correct path, namely $(b,z,y,w)$, such that Observation~\ref{obs:useful} is satisfied; 
because if there were a different correct path, then this correct path 
would contain $(u_1,z)$ and would 
required that $(u_1,z)=(x,x_1)$, but such a path cannot contain $(b,z)$ since
the direction of $(u_1,z)$ is opposite to the one of $(b,z)$ in any potential
correct path.
Moreover, since the pair $(u_1,z)$, $(z,y)$ is forbidden, then the correct path $(u_1,z,b)$ exists.
Hence, $(a',x)$ is not in the correct path containing $(z,b)$, a contradiction to the statement of Observation~\ref{obs:useful}.

This conclude the first part of the proof of Theorem~\ref{theo:important}.

For the second part of the proof, we consider an $j$-incomplete reduction process of $\mathcal{G}$
for some $2\leq j\leq 4$ and have to show that it implies the existence of a cut-obstacle at $I(E_{j-1})$.
By definition of $j$-incomplete reduction process and Theorem~\ref{thm.main2}, the desired result follows
from the fact that no {\ccr} of $I(D_3), I(D_2),  I(D_1),  I(E_4), \ldots, I(E_j)$ 
on $H'$ creates an inner obstacle at $I(E_{j-1})$ (see Definition~\ref{def.inner}).
A quick examination of Figure~\ref{fig:gadget} show us that if $j\in\{2,3\}$, then
there is no inner obstacle at $I(E_{j-1})$; in the case that $j=2$ (resp. $j=3$),
note that all arcs incident to $w'$ (resp. $a$) 
are not incident to any other vertex of $I(E_1)$ (resp. $I(E_2)$).
In the case that $j=4$, if an inner obstacle at $I(E_3)$ were created,
then it would be required that $\alpha_1= \beta_1$ and $\alpha_2= \beta_2$, because
 in an inner obstacle there are exactly 2 arcs that do not have all its end vertices in 
 $I(E_3)$. Moreover, it would be needed that the arc that connects $x$ and $z$  forms a forbidden pair 
 with the arc that has exactly one-end vertex in $I(E_3)$ and this end vertex is $y$; however, this does never occur. 
\end{proof}

The following theorem states that the condition either $\alpha_1=\beta_1$, or $\alpha_2=\beta_2$
is also sufficient for the existence of a complete reduction process of $\mathcal{G}$.

\begin{theorem}\label{theo:existence}
There exists a complete reduction process of $\mathcal{G}$
if and only if either $\alpha_{1} = \beta_1$ and $\alpha_{2} \neq \beta_2$, 
or   $\alpha_{2} = \beta_2$ and $\alpha_{1} \neq \beta_1$.
\end{theorem}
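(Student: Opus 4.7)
The plan is to prove each direction separately, leveraging the analysis already carried out in the proof of Theorem~\ref{theo:important}.

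For the ``only if'' direction, I would invoke Theorem~\ref{theo:important}(i) directly: a complete reduction process implies $\alpha_1 = \beta_1$ or $\alpha_2 = \beta_2$. What remains is to exclude that both equalities hold simultaneously, and this is exactly the content of Case~1(iii) of the proof of Theorem~\ref{theo:important}: assuming both hold, Reduction~1 of $I(E_4)$ combined with Observation~\ref{obs:useful} forces the correct path $(a',z,b)$ and hence the correct cycle $(y,x,z,y)$, which in turn places the two oppositely-directed arcs of the edge $\{x,x'\}$ on a common cycle --- a contradiction to the definition of correct reduction. Hence exactly one equality holds.

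For the ``if'' direction, I would assume without loss of generality $\alpha_1 = \beta_1$ and $\alpha_2 \neq \beta_2$ (the symmetric case reduces to this one after swapping the roles of Reductions~1 and~2 from the proof of Theorem~\ref{theo:important}). My plan is to exhibit an explicit complete reduction process of $\mathcal{G}^{-1}$. First I would reduce $I(D_3), I(D_2), I(D_1)$ in order, choosing correct reductions that yield the configuration of Figure~\ref{fig:gadget} with arcs $e=(v,w)$ and $e'=(a',v)$; this is always possible since each $D_i$ is a star and the partition of the associated $\tilde A$ into three length-two correct paths can be arranged freely. Next I would reduce $I(E_4)=\{u,v\}$ via Reduction~1, obtaining the correct paths $(y,v,w)$, $(a',v,u,z)$, $(z,u,u_2)$, $(u_1,u,v,y)$. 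Under $\alpha_1 = \beta_1$, the identification $u_1 = x$ turns the last path into $(x,u,v,y)$, producing a new arc $(x,y)$, while $\alpha_2 \neq \beta_2$ guarantees $(z,u,u_2)$ is disjoint from the path containing $\alpha_2=(x_2,x)$.

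Then I would reduce $I(E_3)=\{z,y,x\}$ using the correct paths already identified in Case~1(i) of the proof of Theorem~\ref{theo:important}: $(b,z,y,w)$, $(a',z,b)$, a path through $(x,x')$ absorbing $\alpha_2$, and a path through $(x',x)$ absorbing $\beta_2$. That case analysis already verifies no forbidden pair is violated and that $I(E_3)$ is neither a cut-obstacle nor an inner obstacle. Finally I would reduce $I(E_2)=\{a,w,b\}$ and then $I(E_1)=\{a',w',b'\}$: at the moment of each reduction, the arcs on the ear arise either from the stars $D_2, D_3$ (placed precisely to buffer $E_1, E_2$ from the downstream reductions of the gadget) or from the paths produced during the reduction of $E_3$, and I would verify by direct inspection that neither $I(E_2)$ nor $I(E_1)$ is a cut-obstacle or an inner obstacle, invoking Theorem~\ref{thm.main2} to conclude that the required correct reductions exist.

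I expect the main obstacle to be precisely this last verification: forbidden pairs accumulated in the reductions of $D_3, D_2, D_1, E_4, E_3$ could in principle obstruct the reductions of $E_2$ and $E_1$. The structural reason the argument goes through is that the stars $D_2$ and $D_3$ pair arcs at $w'$ and $a$ with arcs at vertices of $H_0$ external to the gadget, so no forbidden pair generated during the earlier steps lies entirely inside $I(E_1)\cup I(E_2)$. Completing this case analysis --- enumerating the arcs and forbidden pairs at $w, b, a', b', w'$ after the reduction of $E_3$ --- constitutes the most technical part of the proof.
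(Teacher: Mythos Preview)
Your proposal is correct and follows essentially the same route as the paper: necessity via Theorem~\ref{theo:important} (with Case~(iii) ruling out both equalities holding simultaneously), and sufficiency by reducing the stars and $I(E_4)$ into the Case~1 configuration, then reducing $I(E_3)$ along the paths identified in Case~1(i).

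The one place where the paper proceeds differently is the final two steps. Rather than checking that $I(E_2)$ and $I(E_1)$ are free of cut-obstacles and inner obstacles and then invoking Theorem~\ref{thm.main2}, the paper simply writes down explicit lists of correct paths and cycles for the reductions of $I(E_2)$ and $I(E_1)$. This is the cleaner route, and your plan has a small wrinkle here: Theorem~\ref{thm.main2} guarantees that \emph{some} correct reduction of $I(E_2)$ exists, but says nothing about whether the particular one you obtain leaves $I(E_1)$ free of a cut-obstacle for the next step. To close the argument you would therefore still need to exhibit a specific reduction of $I(E_2)$ and inspect its effect on $I(E_1)$, at which point you have essentially reproduced the paper's explicit computation. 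Your structural remark about $D_2$ and $D_3$ buffering $w'$ and $a$ from internally generated forbidden pairs is exactly the reason such explicit reductions can be written down.
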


\begin{proof}
The necessity of the condition is stated in Theorem \ref{theo:important}. 
It remains to show that the condition is sufficient.
Without loss of generality, $\alpha_{1} = \beta_1$ and 
$\alpha_{2} \neq \beta_2$ can be assumed.  
We want to prove that a {\ccr} of $\mathcal{G}^{-1}$ 
that creates no cut-obstacles at $I(E_3)$, $I(E_2)$ and $I(E_1)$ exists.

By the proof of Theorem~\ref{theo:important}, it is possible to correctly reduce $I(D_3), I(D_2), I(D_1), I(E_4)$ in 
such a way that we end up in Case 1 (depicted in Figure~\ref{fig.case1}).
We consider the notation from Figure~\ref{fig.case1}; 
recall that we have $(x,x_1)=(u_1,y)$. 
We reduce $I(E_3)$ in such a way that the following list of correct paths  and cycles
determines the reduction: 
$$(b,z,y,w), \quad (a',z,b), \quad (x',x,y,z,u_2), \quad (x_2,x,x'), \quad (x,y,x).$$

This reduction is correct and we obtain the configuration depicted in Figure~\ref{fig.case1i}.

\begin{figure}[h]	
\centering
  \subfigure[A correct reduction of $I(E_3)$ in Case 1(i).]
{\ifpdf\input{case1i.pdf_t}\else\input{case1i.ps_t}\fi
\label{fig.case1i}}\qquad
  \subfigure[A correct reduction of $I(E_3)$ in Case 1(ii).]
{
\ifpdf\input{case1ii.pdf_t}\else\input{case1ii.ps_t}\fi
\label{fig.case1ii}}
\caption{}
\end{figure}

After this reduction of $I(E_3)$, we complete the {\ccr} by reducing $I(E_2)$ and $I(E_1)$, respectively
by the reductions determined by the lists of correct paths  and cycles:
$$ (b,w,b), (a',b,b'), (b',b, w, a, a_1), (a'', a, w, a'), (a_2, a, a'') \,\,\,\, \text{and}$$
$$ (a'',a',\tilde{a}), (\tilde{a}, a', w', w'_1), (w'_2, w', b', \tilde{b}), (\tilde{b}, b', a_1), (a',b',w',a'), \,\,\, \text{respectively.}$$
\end{proof}

\subsection{Gadgets*, gadgets** and double gadget}
\label{sub.gg}

Note that, in the construction of the canonical ear decomposition of $H(G)$, 
there are two blocks of ears involved; both of these blocks are extensions of basic gadgets.
One of them is block~(\ref{eq:gadget}), and it corresponds
to the gadget, which is defined in Definition~\ref{def:gadget} and whose 
analysis is worked out earlier in Section~\ref{sec:main1}.
In addition to the gadget, a slightly different block of ears 
arises (namely, block~(\ref{eq:gadget*})); 
we shall refer to this block as a \emph{1-gadget*}. It turns out that, 
gadgets and 1-gadgets* behave exactly in the same way with respect to 
consecutive correct reductions without cut-obstacles.

 In addition, we introduce other useful extensions of basic gadgets, namely a \emph{2-gadget*,
1-gadget**, 2-gadget**}.

\begin{figure}[h]
  \subfigure[Gadget* after reduction of its stars.]
{
\ifpdf\input{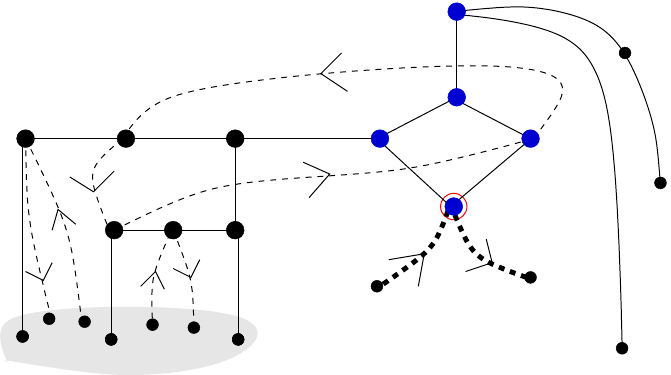_t}\else\input{gadget1.ps_t}\fi
 \label{fig:gadget*}}\qquad
  \subfigure[Gadget** after reduction of its stars.]
{
\ifpdf\input{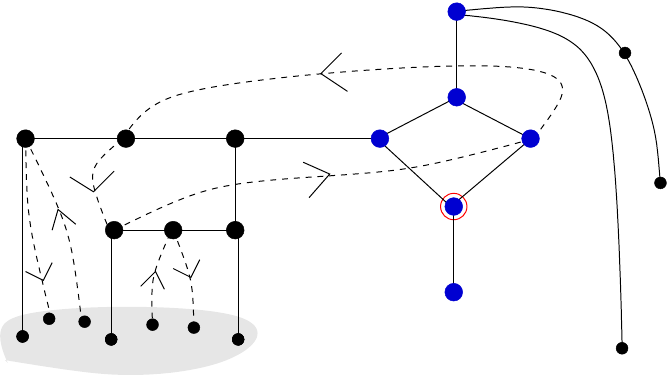_t}\else\input{gadget2.ps_t}\fi
 \label{fig:gadget**}}\caption{A 1-gadget* (resp. 1-gadget**) contains the 1-ear on vertex set $\{x',x,s\}$,
 and a 2-gadget* (resp. 2-gadget**) contains the 2-ear on vertex set $\{x',x,s,x^*\}$.
 In the case of a 2-gadget* or a 2-gadget**, we say that vertex $s$ is the \emph{pending vertex} of the gadget.}\label{fig:*and**}
 \end{figure}

Let $(H_0, H_i, L_i)^n$  be an ear decomposition of a trigraph $H$
and $\mathcal{B}=\mathcal{B}(x,u)$ be a basic gadget of $H$.
A \emph{1-gadget*} $\mathcal{G}$ of $(H_0, H_i, L_i)^n$ associated
to $\mathcal{B}$ is the block of ears
\begin{equation}
 \label{eq:1gadget*} \mathcal{G}= F, \mathcal{B}\end{equation}
                                                 of $L_1, \ldots, L_n$, where $F$ is the 1-ear with internal vertex $x$ and end vertices not in $V(\mathcal{B})$;
in the case that $F$ is a 2-ear with internal vertices $x, s$ and $s$ is neither a joint vertex, nor a replica vertex
of a basic gadget of $H$, we say that $\mathcal{G}$ is a \emph{2-gadget*}. We refer to 1-gadgets* and to 2-gadgets* 
as \emph{gadgets*}.

In addition to gadgets*, we need to introduce a natural extension of them; namely, \emph{gadgets**}.
Informally, a gadget** is obtained from a gadget* by removing the 2-ear $E_4$ and adding
two new 1-ears instead.
A gadget** $\mathcal{G}$ of $(H_0, H_i, L_i)^n$ associated to $\mathcal{B}$ is the block of ears
\begin{equation}
 \label{eq:1gadget**}
\mathcal{G}= F, E_1(\mathcal{B}), E_2(\mathcal{B}), E_3(\mathcal{B}), F_1, F_2, D_1(\mathcal{B}), D_2(\mathcal{B}), D_3(\mathcal{B}),
\end{equation}
where $F$ is as described for gadgets* (accordingly we have 1-gadgets** and 2-gadgets** --- see Figure~\ref{fig:*and**}),
$F_2$ is the 1-ear with vertex set $\{y,v,u\}$ and $F_1$ is the 1-ear with vertex set $\{z,u,u'\}$
with $u'$ the neighbour of $u$ that is not in $V(\mathcal{B})$ (as described in Figure~\ref{fig:gadget**}).

Again, the terminology defined for basic gadgets and gadgets is naturally transfered to gadgets* and gadgets**.
Further, we suppose that  $D_3(\mathcal{B})=L_m$ and that Assumption~\ref{assumption1} holds.
If $\mathcal{G}$ is a gadget*, suppose that $\beta_1$ and $\beta_2$ are the arcs incident 
to the replica vertex of $\mathcal{G}$ with the orientations
according to Figure~\ref{fig:gadget*}. 
If $\mathcal{G}$ is a gadget**, let $\beta_1$ denote the arc $(u',u)$
and $\beta_2$ the arc $(u,u')$.
The following statement for the gadgets* and gadgets** follows from the proof of Theorems~\ref{theo:important} and~\ref{theo:existence}.

\begin{theorem}\label{theo:bstargad}
Let $\mathcal{G}$ be a gadget* or a gadget**.
If there exists a {\ccr} of $\mathcal{G}^{-1}$
that creates no cut-obstacles at a 3-ear from $\mathcal{G}$, 
then $\beta_1$ and $\beta_2$ belong to the two distinct correct paths that contain $(y,x)$ and $(x,y)$,
respectively. Moreover, such a {\ccr} always exists.
\end{theorem}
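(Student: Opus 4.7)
The plan is to imitate the case analysis of Theorems~\ref{theo:important} and~\ref{theo:existence}, adapting it for the two modifications present in a gadget* or gadget**. In all three gadget variants the stars $D_1, D_2, D_3$ are identical, so reducing them yields the configuration of Figure~\ref{fig:gadget}; this is the starting point. A critical remark is that the proof of Observation~\ref{obs:useful} uses only the structure of the stars together with $E_4, E_2, E_1$ and the absence of cut-obstacles at $I(E_1), I(E_2)$. It makes no use of the fact that $E_3$ is a 3-ear. Hence the analogous statement applies with $E_3$ replaced by $E_3'$: in any correct reduction of $I(E_4), I(E_3'), I(E_2), I(E_1)$ without cut-obstacles at the 3-ears, the arc $e=(v,w)$ must lie on the correct path containing $(b,z)$ and $e'=(a',v)$ on the one containing $(z,b)$.

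For the gadget* case I would then replay the four-case analysis of the reduction of $I(E_4)$ from Figure~\ref{fig.reductionE4}. The only change is that, since $x$ is no longer an internal vertex of a 3-ear (it lives in the 1-ear or 2-ear $F$), the conditions previously expressed in terms of $\alpha_1, \alpha_2$ become conditions on the arcs $(x,y)$ and $(y,x)$ created when $I(E_3')$ is correctly reduced. A routine check over the four cases shows that in every scenario without a cut-obstacle at $I(E_3'), I(E_2), I(E_1)$, the arcs $\beta_1$ and $\beta_2$ lie on the correct paths containing $(y,x)$ and $(x,y)$, respectively. The subsequent reduction of $I(F)$ (together with the auxiliary internal vertex $s$ in the 2-gadget* case) extends these paths through $x$ but does not merge them, so the required property is preserved.

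For the gadget** case the 2-ear $E_4$ is replaced by two 1-ears $F_2$ (internal vertex $v$) and $F_1$ (internal vertex $u$). The arcs $\beta_1=(u',u)$ and $\beta_2=(u,u')$ are the two arcs arising from the split of the edge $\{u,u'\}$ during the reduction of $I(F_1)$. A direct enumeration shows that the combined reductions of $I(F_2)$ and $I(F_1)$ produce the same four local pairings at the $y,z$ side of the configuration as the four reductions of $I(E_4)$ in Figure~\ref{fig.reductionE4}; from this point, the argument is identical to the gadget* case and yields the claimed conclusion. For existence, I would exhibit an explicit {\ccr} modelled on the sequence of correct paths from the proof of Theorem~\ref{theo:existence}, with the final reduction of $I(F)$ (and of $I(F_2), I(F_1)$ for the gadget**) chosen compatibly. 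This construction always succeeds since $F$ supplies the necessary flexibility at $x$.

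The main obstacle is the gadget** case: the replacement of the 2-ear $E_4$ by two 1-ears requires verifying that the two-step reduction $I(F_2), I(F_1)$ generates exactly the four pairing configurations of the $E_4$-reduction, with the same forbidden-pair structure. Once this correspondence is established the rest of the argument transfers from the gadget* case essentially verbatim.
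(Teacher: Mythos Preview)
Your proposal is correct and follows precisely the approach the paper indicates: the paper gives no independent argument for this theorem but simply asserts that it ``follows from the proof of Theorems~\ref{theo:important} and~\ref{theo:existence}''. Your outline of how that case analysis transfers --- in particular the observation that the proof of Observation~\ref{obs:useful} does not rely on $E_3$ being a 3-ear, the translation of the $\alpha_i$-conditions into conditions on the arcs $(y,x),(x,y)$ once $x$ sits in $F$, and the reduction of the gadget** case to the same four local configurations at $\{z,y\}$ --- is exactly the adaptation the paper leaves implicit.
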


Because of technical reasons, we present some extra block of ears, which are concatenations of basic gadgets
in gadget* and/or gadget** fashion. 
We refer to them as \emph{double gadgets} and illustrate them in Figure~\ref{fig:double}.
A block of ears $\mathcal{D}$ of $L_{1}, \ldots, L_{n}$  is called a \emph{double gadget} if 
\begin{equation*}
\mathcal{D} = F', \mathcal{G}_{\mathcal{B}}-F(\mathcal{G}_{\mathcal{B}}), \mathcal{G}_{\mathcal{B}'}-F(\mathcal{G}_{\mathcal{B'}})  
\end{equation*}
where $\mathcal{B}=\mathcal{B}(x,u)$, $\mathcal{B}'=\mathcal{B}'(x',u')$ are basic gadgets, 
$\mathcal{G}_{\mathcal{B}}-F(\mathcal{G}_{\mathcal{B}})$ and $\mathcal{G}_{\mathcal{B}'}-F(\mathcal{G}_{\mathcal{B'}})$
are obtained from gadgets* or gadgets** $\mathcal{G}_{\mathcal{B}}$ and $\mathcal{G}_{\mathcal{B}'}$ 
associated to $\mathcal{B}$ and $\mathcal{B}'$ respectively (see~(\ref{eq:1gadget*}) and~(\ref{eq:1gadget**}))
by removing the ear $F(\mathcal{G}_{\mathcal{B}})$ and $F(\mathcal{G}_{\mathcal{B'}})$, respectively, and  
$F'$ is a 2-path with internal vertices $x$, $x'$ and end vertices
not in $V(\mathcal{G}_{\mathcal{B}} \cup \mathcal{G}_{\mathcal{B}'})$.
We denote a double gadget by $\mathcal{D}(\mathcal{B},\mathcal{B}')$.

\begin{figure}[h]	
\centering
\ifpdf\input{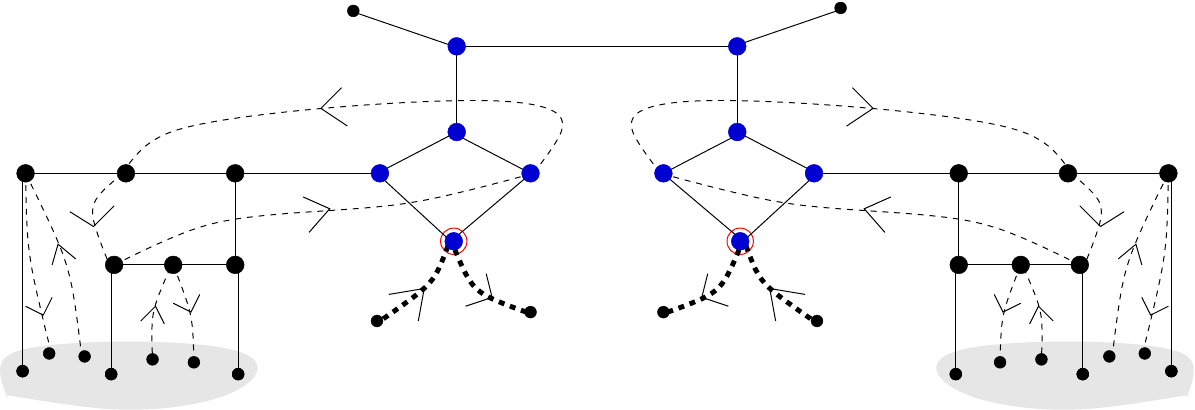_t}\else\input{double.ps_t}\fi
\caption{Example of a double gadget where $\mathcal{G}_{\mathcal{B}}-F(\mathcal{G}_{\mathcal{B}}), \mathcal{G}_{\mathcal{B}'}-F(\mathcal{G}_{\mathcal{B'}})$ are obtained from gadgets*.}
\label{fig:double}
\end{figure}

As expected double gadgets have the same behavior, with respect to
the reduction process, as gadgets* and gadgets** do. Therefore, its analysis follows from the discussion regarding gadgets.

As before, we suppose that the last ear of the double gadget $\mathcal{D}$ is  $L_m$ and that Assumption~\ref{assumption1} holds. 
Note that in $\mathcal{D}(\mathcal{B},\mathcal{B}')$, 
when $\mathcal{G}_{\mathcal{B}}$ and $\mathcal{G}_{\mathcal{B}'}$ are gadgets*, 
the replica vertices of $\mathcal{B}$ and $\mathcal{B}'$ have degree 2 in the mixed graph obtained by the {\ccr}
of $I(L_n),\ldots I(L_{m+1})$ and therefore each of them is incident to two arcs,
say $\beta_1$, $\beta_2$ in the case of the replica vertex of $\mathcal{B}$,
and $\beta'_1$, $\beta'_2$ in the case of the replica vertex of $\mathcal{B}'$ (their directions
are as in Figure~\ref{fig:double}). In the case that $\mathcal{G}_{\mathcal{B}}$ is
a gadget**, we have $u$ has degree 3. If $u''$ denotes the neighbour of $u$ that does not belong to $V(\mathcal{B})$,
we denote $\beta_1=(u'',u)$ and $\beta_2=(u,u'')$; analogously for
$\mathcal{G}_{\mathcal{B}'}$. We point out that possibly $\beta_1= \beta_2'$ or $\beta_2= \beta_1'$.

The following statement for double gadgets follows.

\begin{theorem}\label{theo:double}
Let $\mathcal{D}=\mathcal{D}(\mathcal{B},\mathcal{B}')$ be a double gadget 
such that $\{b,z,y\}$ (resp. $\{b',z',y'\}$) is the {\emph{fixed}} path,
$u$ (resp. $u'$) is the \emph{replica} vertex and $x$ (resp. $x'$) is the \emph{joint} vertex of $\mathcal{B}$ (resp. $\mathcal{B}'$).
If there exists a {\ccr} of $\mathcal{D}^{-1}$ that creates no cut-obstacles at a 3-ear from $\mathcal{D}$,
then the arcs $\beta_1, \beta_2$ (resp. $\beta'_1, \beta'_2$) 
belong to the correct paths that contains $(y,x)$ and $(x,y)$, respectively (resp. $(y',x')$ and $(x',y')$).
Moreover, such a {\ccr} always exists.
\end{theorem}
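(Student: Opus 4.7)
The plan is to reduce the claim to two applications of Theorem~\ref{theo:bstargad}, one for each half of the double gadget $\mathcal{D}$. The key observation is that the 2-path $F'$, whose internal vertices are the joint vertices $x, x'$, plays---for each half of $\mathcal{D}$---the role that the $F$-ear plays in a 2-gadget* (or 2-gadget**) associated with the corresponding basic gadget, with the joint vertex from the opposite half acting as the pending vertex. In particular, the set of 3-ears of each half of $\mathcal{D}$, augmented by $F'$, coincides with the set of 3-ears of a 2-gadget*/**, so any hypothesis about avoidance of cut-obstacles at 3-ears of $\mathcal{D}$ transfers verbatim to the corresponding gadget*/**.

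For the necessity direction I would proceed as follows. Assume a {\ccr} of $\mathcal{D}^{-1}$ creating no cut-obstacles at a 3-ear from $\mathcal{D}$ exists. Since $x$ and $x'$ are end vertices (not internal vertices) of $E_3'(\mathcal{B})$ and $E_3'(\mathcal{B}')$ and are the internal vertices of $F'$, they are not reduced while the internal vertices of $\mathcal{G}_{\mathcal{B}'}-F(\mathcal{G}_{\mathcal{B}'})$ and then of $\mathcal{G}_{\mathcal{B}}-F(\mathcal{G}_{\mathcal{B}})$ are processed. Therefore the portion of the reduction corresponding to $\mathcal{G}_{\mathcal{B}'}-F(\mathcal{G}_{\mathcal{B}'})$ is, after identifying $F'$ with the $F$-ear of a 2-gadget*/**, a {\ccr} of the form considered in Theorem~\ref{theo:bstargad}, and that theorem supplies the desired assertion on $\beta'_1, \beta'_2$. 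An analogous argument applied to the second block (with $F'$, now decorated by the arcs at $x'$ produced in the previous step, again playing the role of $F$) yields the assertion on $\beta_1, \beta_2$.

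For the existence direction I would stitch together the two {\ccr}s supplied by Theorem~\ref{theo:bstargad} applied to each half viewed as a 2-gadget*/**, and conclude by performing a correct reduction of $I(F') = \{x, x'\}$. This final reduction is easy to exhibit: after the two halves have been reduced, each of $x$ and $x'$ is incident to exactly one incoming and one outgoing arc coming from the adjacent half, together with the two edges of $F'$; these can be paired into correct paths running through the endpoints of $F'$, avoiding any cut-obstacle at $I(F')$.

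The main obstacle is the potentially degenerate case in which $\beta_1 = \beta'_2$ or $\beta_2 = \beta'_1$---that is, when the two halves share an arc across $F'$. In that situation the two applications of Theorem~\ref{theo:bstargad} are not independent, because the correct path carrying the shared arc must be threaded consistently through both halves. I would handle this by running the same case analysis as in the proof of Theorem~\ref{theo:important} along the shared arc; since $F'$ admits only a small number of correct-path configurations and each of them is compatible with the required containment in the correct paths through $(y,x), (x,y), (y',x'), (x',y')$, the conclusion of the theorem is preserved. Verifying this compatibility is exactly where the bulk of the work should lie, and it is the step that deserves the most care in the full write-up.
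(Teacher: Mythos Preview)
Your approach is essentially the paper's own: the paper does not give a separate proof of Theorem~\ref{theo:double} but simply asserts that ``double gadgets have the same behavior, with respect to the reduction process, as gadgets* and gadgets** do'' and that ``its analysis follows from the discussion regarding gadgets.'' Your proposal is a reasonable fleshing-out of that one-line remark, reducing each half to an instance of Theorem~\ref{theo:bstargad} with $F'$ playing the role of the $F$-ear, and you correctly flag the shared-arc case $\beta_1=\beta'_2$ or $\beta_2=\beta'_1$, which the paper also explicitly notes can occur.

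One small caveat: by definition a 2-gadget* requires its pending vertex to be neither a joint nor a replica vertex, whereas in your reduction the ``pending'' vertex is the joint vertex of the other half. This is only a formal mismatch---the restriction in the definition serves to separate the 2-gadget* case from the double-gadget case, not to drive the proof of Theorem~\ref{theo:bstargad}---but in a full write-up you should say explicitly that the argument behind Theorem~\ref{theo:bstargad} (namely the case analysis from Theorems~\ref{theo:important} and~\ref{theo:existence}) does not use that restriction, so it applies here as well.
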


\subsection{A directed cycle double cover for $G$}

In this section we complete the proof of Theorem~\ref{th:mainmain}; 
we want to prove that Conjecture~\ref{conj:weakavoidance}
implies the {\dcdc} conjecture in general graphs.
Let $G'$, $H(G)$, $S$ and $\Gamma$ as defined in Section~\ref{sec:main}.
We recall that the canonical ear decomposition of $H(G)$ 
admits $H_0,S$ and it is robust provided that graph $G$ is cyclically 3-edge-connected
and the ear decomposition of $G$ used in the construction of $H(G)$ is super robust. 

In order to understand the aim of the next definition, recall that 
Observation~\ref{o.cl} claims that if $H', \mathcal{H}'$ is an element in the closure
of $H(G)$ and its canonical ear decomposition, then every
basic gadget of $H(G)$ is contained in $H'$.

\begin{definition}[Relevant ear decompositions]
\label{def.rell}
Let $\mathcal H$ be the canonical ear decomposition of $H=H(G)$ and
$H', \mathcal H'$, be a pair in the closure of $H$, $\mathcal H$. 
 Set $\mathcal H'=(H_0, H'_i, L'_i)^n$.
 We say that $\mathcal H'$ is \emph {relevant} if 
 the sequence $L'_1, \ldots, L'_n$ (possibly after 
 some reordering) can be partitioned into blocks of ears, with each block
 being either a star (on 3 or 4 vertices), 
 or a gadget, or a gadget*, or a gadget**, or a double gadget.

\end{definition}

Note that, by definition, the canonical ear decomposition is relevant, since its sequence of building
ears can be partitioned into blocks of ears  with each block
consisting of a star on 4 vertices, or of a gadget, or of a 1-gadget*.

The following observation follows directly from the construction of trigraph $H(G)$, and 
it helps understanding relevant ear decompositions
in the closure and the role of 2-gadgets* and 2-gadgets**.
Recall that, by definition, the pending vertex of 2-gadgets* and 2-gadgets**
is neither a joint vertex, nor a replica vertex.

\begin{observation}
\label{obs.2gad*}
Let $\mathcal H=(H_0, H_i, L_i)^n$ be a canonical ear decomposition of $H=H(G)$ and
$H', \mathcal H'=(H_0, H'_i, L'_i)^n$, be a pair in the closure of $H$, $\mathcal H$
such that $\mathcal H'$ is relevant. 
If $\mathcal{G}$ is a 2-gadget* of $\mathcal H'$ with $x$ and $s$ the internal vertices
of $F(\mathcal{G})$ and $x$ the joint vertex of $\mathcal{G}$ and $s$ the pending
vertex of $\mathcal{G}$  (see definition in Figure~\ref{fig:gadget**}), then $s$ is the internal vertex of a star 
on~4 vertices of $\{L_1,\ldots,L_n\}$.
\end{observation}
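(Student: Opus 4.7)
I plan to argue by elimination, identifying the vertex $s$ inside $V(H)$. Since closure operations do not change the vertex set, $V(H') = V(H)$, so $s \in V(H)$. The vertex $s$ is the internal vertex of an ear of $\mathcal H'$, hence $s \notin V(H_0)$; by the definition of a 2-gadget*, $s$ is neither a joint vertex nor a replica vertex of any basic gadget of $H$. From the construction of $H = H(G)$ in Section~\ref{sec:main}, every remaining vertex of $V(H) \setminus V(H_0)$ falls into exactly one of the following classes: (a) a ``path-interior'' vertex of some basic gadget $\mathcal{B}_{0}$, that is, one of $\{a',w',b',a,w,b,y,z,v\}$ of $\mathcal{B}_{0}$; (b) the center of one of the stars $D_1(\mathcal{B}_{0}), D_2(\mathcal{B}_{0}), D_3(\mathcal{B}_{0})$; or (c) the center $\Gamma(s_k)$ of a star ear $P_k$ of $G$. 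Classes (b) and (c) are precisely the internal vertices of 4-vertex stars of the canonical decomposition $\{L_1,\ldots,L_n\}$, so it suffices to rule out class (a).

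Suppose for contradiction that $s$ lies in class (a), so $s \in \{a',w',b',a,w,b,y,z,v\}$ of some basic gadget $\mathcal{B}_{0}$. By Observation~\ref{o.cl}, $\mathcal{B}_{0}$ is a subgraph of $H'$, and since $\mathcal H'$ is relevant, the ears of $\mathcal H'$ covering the edges of $\mathcal{B}_{0}$, together with at most one ``connecting'' extra edge, form a single block of type gadget, gadget*, gadget**, or a half of a double gadget associated to $\mathcal{B}_{0}$. A direct inspection of these block types shows that the nine vertices $\{a',w',b',a,w,b,y,z,v\}$ of $\mathcal{B}_{0}$ are necessarily the internal vertices of the path ears inside that block (specifically the internal vertices of $E_1, E_2$, of $E_3'$ or $E_3$, and of $E_4$ or $F_2$). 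In particular $s$ would be an internal vertex of a path ear of $\mathcal{B}_{0}$'s block in $\mathcal H'$. But $s$ is also the internal vertex of the 2-ear $F(\mathcal G)$ belonging to the 2-gadget* block associated to the basic gadget $\mathcal B$ whose joint is $x$. Since a vertex can be internal to at most one ear of an ear decomposition (whether the two ears live in the same block or in different blocks), we obtain a contradiction. Hence $s$ belongs to class (b) or (c), which gives the conclusion.

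The hard part should be the ``direct inspection'' step in the second paragraph: verifying, in each type of block allowed by a relevant decomposition, that the edges of the basic gadget force the specific vertices of $\{a',w',b',a,w,b,y,z,v\}$ of $\mathcal{B}_{0}$ to take on their canonical roles as internal vertices of the path ears within the block. This rigidity should follow from the fact that the basic gadget is preserved as a subgraph of $H'$ together with the shape of each block described in Definitions~\ref{def:gadget} and~\ref{def.rell} and in the definitions of 1,2-gadget*, 1,2-gadget** and double gadget---these nine vertices are exactly the degree-2 interior vertices along the paths $E_1, E_2, E_3'$ and $E_4$ inside $\mathcal{B}_{0}$---but the bookkeeping across the four allowed block types requires care.
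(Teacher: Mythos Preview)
Your elimination strategy is sound, and the paper itself offers no proof beyond the remark that the observation ``follows directly from the construction of trigraph $H(G)$''; your proposal is thus a legitimate expansion of what the authors leave implicit.

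That said, the step you single out as ``the hard part''---checking block-by-block that the nine vertices of $\mathcal{B}_0$ land as internal vertices of the path ears in whatever block-gadget of $\mathcal{H}'$ is associated to $\mathcal{B}_0$---can be bypassed entirely by a local degree argument that is closer in spirit to ``follows directly.'' From Definition~\ref{def:basicgadget} one reads off that every vertex of a basic gadget $\mathcal{B}_0$ \emph{other than} its joint and its replica has all three incident edges inside $\mathcal{B}_0$ (this holds for the nine class-(a) vertices and also for the three star centres $d_1,d_2,d_3$). Since Observation~\ref{o.cl} guarantees that $\mathcal{B}_0$ survives as a subgraph of $H'$, the neighbourhood in $H'$ of any such vertex is contained in $V(\mathcal{B}_0)\cup V(H_0)$. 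Now the pending vertex $s$ is adjacent in $H'$ to the joint $x$ of $\mathcal{B}$ through the edge $\{x,s\}$ of $F(\mathcal{G})$. If $\mathcal{B}_0\neq\mathcal{B}$ this is impossible for a class-(a) vertex of $\mathcal{B}_0$, since $x\notin V(\mathcal{B}_0)\cup V(H_0)$; if $\mathcal{B}_0=\mathcal{B}$ the only candidate is $s=y$, but then the leaf of $F$ adjacent to $s$ would lie in $V(\mathcal{B})$, contradicting the definition of a 2-gadget*. This rules out class~(a) without ever touching the block partition, and in fact shows that the hypothesis ``$\mathcal{H}'$ is relevant'' is not needed. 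Your route works too, but it requires first arguing that a block-gadget associated to $\mathcal{B}_0$ actually exists in the relevant partition (via the fixed path in $S$ and condition~(2) of Definition~\ref{def.S}), which is an extra layer you can avoid.
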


We now  extend Definition~\ref{def:redprocess}.

\begin{definition}[Reduction process --- extension]\label{def:redprocess2}
Let $(H_0, H_i, L_i)^n$ be an ear decomposition of~$H(G)$.
A {\em reduction process} of $L_1, \ldots, L_n$ is a {\ccr} 
of $I(L_n), \ldots, I(L_j)$ so that
\begin{itemize}
 \item[(i)] either $j= 1$, or
 \item[(ii)] $j>1$ and $I(L_{j-1})$ does not have correct reduction or $I(L_{j-1})$ is a cut-obstacle. 
\end{itemize}
If (i) holds, we say that the reduction process is \emph{complete}.
Otherwise, we refer to it as \emph{$j$-incomplete}.
\end{definition}

In what follows, $\mathcal{H}=(H_0, H_i, L_i)^n$ denotes the canonical ear decomposition of $H=H(G)$. 

Lemma~\ref{lemma:m11} is implied by Theorem~\ref{theo:important}, Theorem~\ref{theo:bstargad} and Theorem~\ref{theo:double}.

\begin{lemma}
\label{lemma:m11}
Let $(H_0, H'_i, L'_i)^n$ be a relevant ear decomposition in the closure of $H$, $\mathcal{H}$. 
Each reduction process of $L'_1, \ldots, L'_n$ satisfies exactly one of the following two statements.
\begin{itemize}
\item[\textendash]
The reduction process is complete.
\item[\textendash]
The reduction process is $j$-incomplete and $I(L'_{j-1})$ is a cut-obstacle. 
\end{itemize}
\end{lemma}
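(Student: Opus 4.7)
The plan is to proceed by induction on the number of blocks in the partition of $L'_1, \ldots, L'_n$ guaranteed by Definition~\ref{def.rell}. After the reordering mentioned there, we may assume the blocks appear contiguously in the sequence, so any reduction process traverses them in reverse order, one block at a time. The analysis then reduces to proving the claimed dichotomy block by block, under the inductive hypothesis that all later blocks have been completely reduced, so that Assumption~\ref{assumption1} is in force at the moment we examine the currently active block.

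For each of the four possible block types, the corresponding structural theorem supplies the required dichotomy. A star block on 3 or 4 vertices is handled directly: its single internal vertex has no associated inner-obstacle configuration, since inner obstacles (Definition~\ref{def.inner}) are defined only for 3-ears and no 3-ear is involved, so the only possible obstruction to the reduction of its internal vertex is a cut-obstacle. For a gadget block, Theorem~\ref{theo:important} yields exactly the stated dichotomy: every reduction process of the block is either complete, or it is $j$-incomplete and $I(E_{j-1})$ is a cut-obstacle. Gadget*, gadget**, and double gadget blocks are treated analogously using Theorems~\ref{theo:bstargad} and~\ref{theo:double}.

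The main obstacle is ensuring consistency at the boundary between blocks: the gadget theorems are stated assuming a prior CCR of the higher-indexed ears, and we must check that the arcs they deposit at the joint and replica vertices of the currently active block are compatible with the hypotheses of the relevant theorem. The inductive hypothesis supplies the CCR, and the definition of the closure (Definition~\ref{def.closure}) together with Observation~\ref{o.cl} guarantees that every basic gadget lives inside $H'$ with its structure preserved, so the gadget theorems apply verbatim to the mixed graph obtained so far. A minor subtlety concerns 2-gadgets* and 2-gadgets**: by Observation~\ref{obs.2gad*} their pending vertex belongs to a star on 4 vertices that was reduced in an earlier (higher-indexed) block, and the arcs introduced by that reduction feed cleanly into the hypotheses of Theorem~\ref{theo:bstargad} without altering its conclusion. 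Stitching the block-level dichotomies together gives the lemma: either every block reduces fully, producing a complete reduction process, or the first failing block exhibits a cut-obstacle at $I(L'_{j-1})$, as claimed.
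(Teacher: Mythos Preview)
Your proposal is correct and follows essentially the same approach as the paper, which simply states that the lemma is implied by Theorems~\ref{theo:important},~\ref{theo:bstargad} and~\ref{theo:double}; you have merely spelled out the block-by-block induction that this one-line reference leaves implicit. The only point worth flagging is that Theorems~\ref{theo:bstargad} and~\ref{theo:double}, as stated, assert the existence of a good {\ccr} rather than the full dichotomy for every reduction process, but since the paper itself derives them ``from the proof of Theorems~\ref{theo:important} and~\ref{theo:existence}'' the intended meaning (and your use of it) matches.
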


In Lemma~\ref{lemma:main} we state that superb relevant ear decompositions 
encode directed cycles double covers.

\begin{lemma}\label{lemma:main}
Let $H', \mathcal{H}'=(H_0, H'_i, L'_i)^n$ be in the closure of $H$, $\mathcal{H}$
such that $\mathcal{H}'$ is a relevant ear decomposition. 
If $\mathcal{H}'$ is superb, then it encodes a {\dcdc} of $G$.
\end{lemma}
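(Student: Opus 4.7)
The plan is to use the superbness of $\mathcal{H}'$ together with Lemma~\ref{lemma:m11} to force the reduction process of $L'_1,\ldots,L'_n$ on $H'$ to be complete, and then to project this {\ccr} via $\Gamma$ to a {\ccr} of a connected partition of $V(G')$ on $G'$; Proposition~\ref{prop:consecorr} applied to the latter will yield a {\dcdc} of $G'$ and hence of $G$.

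First I will argue completeness. By definition of superb, there is a {\ccr} of $I(L'_n),\ldots,I(L'_j)$ that creates no cut-obstacle at the internal vertices of any 3-ear of $\{L'_{j-1},\ldots,L'_n\}$. Because $\mathcal{H}'$ is relevant, Lemma~\ref{lemma:m11} states that such a reduction process is either complete, or is $j$-incomplete with $I(L'_{j-1})$ a cut-obstacle. Inspecting the five block types of a relevant ear decomposition, together with Theorem~\ref{theo:important}(ii) and the analogous parts of Theorems~\ref{theo:bstargad} and~\ref{theo:double}, any cut-obstacle forced by Lemma~\ref{lemma:m11} must lie at the internal vertices of a 3-ear of the block being reduced. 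Superbness therefore rules out the $j$-incomplete case, so $j=1$.

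Next I will interpret the block-by-block reduction of $H'$ as a block-by-block reduction of $G'$. A star block with center $\Gamma(s)$ simulates the correct reduction of the star-center $s$ on $G'$. A gadget, gadget* or gadget** block associated to basic gadget $\mathcal{B}(x,u)$ with $u=\Gamma(b)$, $b$ an internal vertex of a path of $G$, simulates the correct reduction of $b$ on $G'$: Theorems~\ref{theo:important} and~\ref{theo:bstargad} guarantee that, in a complete reduction of the block, the external arcs at the replica $u$ and at the joint $x$ pair up together with the two arcs arising from the edge $\{x,x'\}$ to produce three correct paths exiting the block through precisely the three $\Gamma$-images of the edges of $G'$ incident to $b$; the remaining auxiliary arcs (inside $D_1,D_2,D_3,E_1,\ldots,E_4$) are absorbed into internal correct paths and cycles. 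A double-gadget block analogously simulates the correct reduction of two consecutive internal vertices of a path of $G$ via Theorem~\ref{theo:double}. Aggregating, the complete {\ccr} on $H'$ projects via $\Gamma$ to a {\ccr} of $V(G')\setminus\{x_0,y_0,v_0\}$ on $G'$.

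Finally, the induced partition of $V(G')\setminus\{x_0,y_0,v_0\}$ is connected because $\mathcal{H}'$ is an ear decomposition and the $\Gamma$-projection contracts each basic gadget to a single vertex of $G'$ while preserving connectedness. Prepending the triangle $\{x_0,y_0,v_0\}$ gives a connected partition of $V(G')$, and the final reduction on $G'$ of that triangle is straightforward since only three vertices remain. Proposition~\ref{prop:consecorr} then constructs a {\dcdc} of $G'$, which yields a {\dcdc} of $G$ since $G'$ arises from $G$ by a single $Y$-$\Delta$ operation. I expect the main technical obstacle to be the bookkeeping in the third paragraph: one must verify, for each block type, that the explicit lists of correct paths delivered by Theorems~\ref{theo:important},~\ref{theo:bstargad} and~\ref{theo:double} (cf.\ those exhibited in the proof of Theorem~\ref{theo:existence}) leave no residue beyond the three external arc-pairs on the $\Gamma$-images of the edges incident to the simulated $G'$-vertex, so that the $\Gamma$-projection really is a correct reduction on $G'$ and not a mere projection of arc-sets.
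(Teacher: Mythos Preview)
Your proposal is correct and follows essentially the same route as the paper: deduce completeness from superbness and Lemma~\ref{lemma:m11}, project the block-by-block {\ccr} of $H'$ through $\Gamma$ to a {\ccr} of a connected partition of $V(G')$, reduce the residual triangle $V_0$, and invoke Proposition~\ref{prop:consecorr}. The paper carries out the bookkeeping you flag at the end via two explicit inductive invariants (matching arcs and forbidden pairs in the mixed graph on $G'$ with their $\Gamma$-images in the mixed graph on $H'$), and handles one case you elide---a 2-gadget* or 2-gadget** projects to a \emph{two}-element set $\{b,s\}$ in $G'$ (the preimages of the replica and of the pending vertex), not a singleton.
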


\begin{proof}
By Lemma~\ref{lemma:m11} and definition of superb, 
it follows that a {\ccr} of $I(L'_n), \ldots, I(L'_1)$ that does not create 
cut-obstacle at any 3-ear exists. We show that such a {\ccr} encodes a {\dcdc} of~$G$.

We recall that, by Observation~\ref{o.cl}, the cubic graph $G'$ can be obtained from the trigraph $H'$ by contracting each basic 
gadget of $H$ into a single vertex. 
Since $\mathcal H'$ is relevant, its sequence of ears
$L'_1, \ldots, L'_n$ can be partitioned into blocks of ears 
$\mathcal E_1,\ldots, \mathcal E_t$ where each 
$\mathcal E_i$ is either a star on 3 or 4 vertices, or a gadget, or a gadget*, or a gadget**, or a double gadget.
This partition gives rise to a connected partition $W_1, \ldots W_t$ 
of the union of the internal vertices of the ears of $G'$, 
where each $W_i$ contains either
a single vertex, or two vertices in the following way:
\begin{itemize}
 \item if $\mathcal E_i$ is a star with center $\Gamma(s)$, then $W_i=\{s\}$,
\item if $\mathcal E_i$ is a gadget or a 1-gadget*, or a 1-gadget** with replica vertex $\Gamma(b)$, then $W_i=\{b\}$,
\item if $\mathcal E_i$ is a 2-gadget*, or a 2-gadget** with replica vertex $\Gamma(b)$ and pending vertex $\Gamma(s)$,
then $W_i=\{b,s\}$,
\item if $\mathcal E_i$ is a double gadget with replica vertices $\Gamma(b), \Gamma(b')$, then $W_i=\{b, b'\}$.
\end{itemize}
Recall that function $\Gamma$ (see Definition \ref{def.ggg}) embeds the set of vertices and edges of $G'$ into a subset of the vertices and edges of $H'$. 
Assume now that it is possible to perform a {\ccr} of $W_t, \ldots W_1$. 
After performing this {\ccr} of $W_t, \ldots W_1$ on $G'$, we obtain a mixed graph
on the vertex set $V_0$, where the underlying graph is a triangle.
Moreover, by Observation~3 of~\cite{ourwork}, we have that each mixed triangle has a correct reduction; 
thus, by Proposition~\ref{prop:consecorr}, $G'$ has a {\dcdc}, and also $G$ does. 
Thus, to conclude the proof of the lemma, it suffices to prove the following claim.

\begin{claim}\label{cl:cons}
A {\ccr} of $\mathcal E_t^{-1},\ldots, \mathcal E_1^{-1}$
induces a {\ccr} of~$W_t, \ldots W_1$.  
\end{claim}

The rest of the proof is devoted to prove Claim~\ref{cl:cons}.
Let $i \in \{1, \ldots, t\}$. We assume that a {\ccr} of $\mathcal E_t^{-1},\ldots, \mathcal E_{i}^{-1}$ 
and the corresponding {\ccr} of $W_t, \ldots W_{i}$
is performed on $H'$ and $G'$, respectively. Let $M(i)$ and $M(G',i)$ 
denote the generated mixed graphs, with $M(t)=H'$, $M(G',t) =G'$. 
Without loss of generality, $\mathcal E_t^{-1}$ is a star, and hence
the correct reduction of $W_t$ is natural since $W_t$ is the center of a star.
Thus, it suffices to prove that the correct reduction of $\mathcal E_{i-1}^{-1}$ on $M(i)$
induces a correct reduction of $W_{i-1}$ on $M(G',i)$.

The following statements, namely Properties~\ref{prop:keep1} and~\ref{prop:keep2}, 
are satisfied for every $u, v \in V(G')$ and for $j = t$. 
We assume that they also hold for all $j\geq i$ and we argue that they are satisfied for~$i-1$.

\begin{property}\label{prop:keep1}
\emph{The arc $(u,v)$ is in $M(G',j)$ if and only if there exists an arc $\vec{a}$ 
in $M(j)$ with its head in $\{x_{\Gamma(v)}, \Gamma(v)\}$ and its
tail in $\{x_{\Gamma(u)}, \Gamma(u)\}$; in other words, after
vertex-contraction of all basic gadgets of $M(i)$ into their replica vertices,
the arc  $(\Gamma(u), \Gamma(v))$ exists. Recall that $x_{\Gamma(u)}$ denotes 
the joint vertex of the basic gadget with replica $\Gamma(u)$; in case $\Gamma(u)$
is not part of a basic gadget, set $x_{\Gamma(u)}=\Gamma(u)$.
By abuse of notation, we denote $\vec{a}$ by $\Gamma((u,v))$.
Furthermore, note that if a pair of arcs is forbidden in $M(G',i)$, 
then their images under $\Gamma$ form a pair of arcs forbidden in $M(i)$.}
\end{property}

\begin{property}\label{prop:keep2}
\emph{Suppose that $(u',v')$ is an arc that belong to the correct path of 
the correct reduction of $W_i$ on $M(G',i+1)$ which is replaced by arc $(u,v)$ in order to obtain $M(G',i)$.
Then, either $(u',v')$ is an arc of $M(G',i+1)$, or $\{u',v'\}$ is an edge of $M(G',i+1)$.
If  $(u',v')$ is an arc of $M(G',i+1)$, then the arc $\Gamma((u',v'))$ of $M(H(G),i+1)$ (defined in Property~\ref{prop:keep1})
is in the correct path replaced by the arc $\Gamma((u,v))$. 
If $(u',v')$ is an arc obtained by the orientation of edge $\{u',v'\}$ of $M(G',i)$, then the corresponding orientation of 
$\Gamma(\{u',v'\})$ is in the correct path replaced by the arc $(\Gamma((u,v))$.}
 \end{property}

Assuming validity of both properties for $i$, it is a routine to check that it is also valid for $i-1$ by
using Theorems~\ref{theo:important},~\ref{theo:existence},~\ref{theo:bstargad} and~\ref{theo:double}.
 This finishes the proof of the statement of Claim~\ref{cl:cons}, and also of Lemma~\ref{lemma:main}.
\end{proof}

Finally, Theorem~\ref{th:mainmain} is a straightforward consequence of Lemma~\ref{lemma:main} 
and Lemma~\ref{l.last}. 
We first make a crucial observation, and then formulate Lemma~\ref{l.last}.
As before, $\mathcal H= (H_0, H_i, L_i)^n$ denotes the canonical ear decomposition of $H(G)$.

\begin{observation}\label{obs:suprob}
Let $H', \mathcal H'= (H_0, H'_i, L'_i)^n$ be in the closure 
of $H(G)$, $\mathcal H$. If $\mathcal H'$ is superb, then
$\mathcal H'$ is a robust ear decomposition.
\end{observation}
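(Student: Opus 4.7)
The plan is to derive the robustness of $\mathcal{H}'$ by extending the counting argument underlying Observation~\ref{obs:cuto1} from the three-component case to the two-component case, using the ``admits $H_0, S$'' structure inherited from the closure.

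First, I would fix a {\ccr} of $I(L'_n), \ldots, I(L'_j)$ witnessing superbness of $\mathcal{H}'$, and treat 3-ears inside and outside the reduction range separately. For every 3-ear $L'_i$ with $i \geq j-1$, I would apply Observation~\ref{obs:cuto1}: if its descendant had three connected components, then any {\ccr} (in particular our superb one) would create a cut-obstacle at $I(L'_i)$, contradicting clause (ii) of superbness. Hence the descendant of every such 3-ear has at most two components.

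Next, I would verify the moreover condition of Definition~\ref{def.wr} for 3-ears $L'_i$ (in the reduction range) whose descendant has exactly two components $D_1, D_2$. The three boundary edges from $I(L'_i)$ must split $3=1+2$ between $D_1$ and $D_2$, since both components must be connected to $I(L'_i)$; say $D_1$ receives a single edge at $v^1 \in I(L'_i)$. The cut-obstacle condition at $U=I(L'_i)$ after reducing $D_1\cup D_2$ requires $|C_U^E| \geq 2|C_U^A|$; with $|C_U^E|=2$ given by the two leaf edges of $L'_i$, at most one arc may cross $I(L'_i)$. A careful bookkeeping of how the reduction of $D_1$ alone contributes arcs incident to $v^1$, combined with the admissibility structure of Definition~\ref{def.S} classifying every 3-ear as a base, up, or antenna, forces $D_1$ to be a single vertex $y$ realized as the center of a star ear with its two remaining edges both landing in $V(H_0)$. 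Edges of $y$ landing in $V(H_{i-1}) \setminus V(H_0)$ would generate crossings at $v^1$ that no correct reduction can absorb into internal cycles, producing a cut-obstacle at $I(L'_i)$.

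Finally, for 3-ears $L'_i$ outside the reduction range (that is, $i<j-1$), I would argue that robustness is inherited from the canonical decomposition. Such a 3-ear is either a base, up, or antenna of the admissible decomposition $\mathcal{H}'$; the closure operations (modification of the decomposition or a local exchange per Definition~\ref{def.closure}) have a localized effect on the descendant of any 3-ear, and the canonical ear decomposition $\mathcal{H}$ of $H=H(G)$ is robust by Lemma~\ref{lemma:supimplrob} combined with Lemma~\ref{th:robust}. A comparison via Observation~\ref{o.cl}, which guarantees that each basic gadget survives intact in any closure element, confirms that the descendants of these 3-ears in $\mathcal{H}'$ have the same qualitative structure as in $\mathcal{H}$, hence are robust.

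The main obstacle I expect is the counting step for the two-component case: one must track how the correct-path/cycle partition of the reduction of $D_1, D_2$ distributes arcs at the boundary of $I(L'_i)$, and show that any deviation from the ``isolated $V(H_0)$-attached vertex'' pattern creates at least two crossing arcs, triggering a cut-obstacle. This relies on both the ban on 2-cycles composed solely of $\tilde{A}$-arcs and the admissibility constraints on 3-ears in the ``admits $H_0, S$'' setting.
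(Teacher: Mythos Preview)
Your approach has a genuine gap in the two-component analysis, and it also misses the much shorter route the paper takes.

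The paper does not attempt a general counting argument. Instead it exploits that, since $\mathcal{H}'$ admits $H_0,S$, every 3-ear of $\mathcal{H}'$ is either $E_1(\mathcal{B})$, $E_2(\mathcal{B})$, or the antenna $E_3(\mathcal{B})$ of some basic gadget $\mathcal{B}$. For $E_1$ and $E_2$ the robustness condition holds automatically and structurally: their descendants always have exactly two components, one of which is the center of $D_2$ or $D_3$, an isolated vertex with two neighbours in $V(H_0)$. For $E_3(\mathcal{B})$ the descendant has at most two components and \emph{never} contains an isolated $H_0$-attached vertex; so if $\mathcal{H}'$ fails robustness it must be at some $E_3(\mathcal{B})$ whose descendant has two components, which forces the replica $u$ and the joint $x$ to lie in different components. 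Hence $\alpha_1\neq\beta_1$ and $\alpha_2\neq\beta_2$ in every {\ccr}, and Theorem~\ref{theo:important} then says every reduction process of the gadget is $j$-incomplete with a cut-obstacle at $I(E_{j-1})$, contradicting superbness. The whole argument is three lines once Theorem~\ref{theo:important} is available.

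Your counting step for the ``moreover'' clause does not work as stated. After reducing a singleton component $\{y\}$ attached to $v^1$, the vertex $v^1$ acquires exactly one incoming and one outgoing arc regardless of whether the other two neighbours of $y$ lie in $V(H_0)$ or in $V(H_{i-1})\setminus V(H_0)$; the arc count across $C_U$ is identical in both cases. So pure bookkeeping of crossings cannot separate the robust configuration from the non-robust one --- the distinction in Definition~\ref{def.wr} is there for the gadget machinery, not for a cut inequality at $I(L'_i)$ itself. Your third step is also unsafe: local exchanges alter $H'$ and hence the descendants of 3-ears, so robustness of the canonical $\mathcal{H}$ does not transfer to $\mathcal{H}'$ through Observation~\ref{o.cl}, which only guarantees survival of the basic gadgets as subgraphs. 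You should instead classify the 3-ears via the ``admits $H_0,S$'' constraint and invoke Theorem~\ref{theo:important} for the antenna case.
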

\begin{proof}
Assume $\mathcal H'$ is not robust.
First of all note that the descendant of $E_1(\mathcal{B}), E_2(\mathcal{B})$, for all basic gadgets $\mathcal{B}$ of 
$H'$ consist of exactly 2 connected components and one of them is an isolated vertex
adjacent to two vertices in $V(H_0)$, since it is exactly the center of the stars $D_2(\mathcal{B}), D_3(\mathcal{B})$, respectively. 
On the other hand, note that the descendant of every 3-ear $E_3(\mathcal{B})$ has at most 2 connected components, and 
there is no component corresponding to an isolated vertex adjacent to vertices in $V(H_0)$.
Thus, by the assumption, for some gadget $\mathcal{G}=\mathcal{G}(\mathcal{B})$ of $H'$, there exists a 3-ear 
$E_3(\mathcal{B})=L'_{m} \in \{L'_1, \ldots, L'_n\}$ such that its descendant has exactly 2 connected components
and hence, there is no path in the descendant of $E_3(\mathcal{B})$ connecting $u$ to $x$; we consider
name of the vertices as in Figure~\ref{fig:gadget}. Note that for every {\ccr}
of $I(L'_n), \ldots, I(L'_{m+1})$ we have $\alpha_1 \neq \beta_1$ and $\alpha_2 \neq \beta_2$.
Therefore, by Theorem~\ref{theo:important}, each reduction process of $\mathcal{G}$ is 
$j$-incomplete and $I(E_{j-1}(\mathcal{G}))$ is a cut-obstacle for some $2\leq j \leq 4$.
This implies that $\mathcal H'$ is not superb, a contraction.
\end{proof}

\begin{lemma} \label{l.last}
Let $H', \mathcal H'= (H_0, H'_i, L'_i)^n$ be in the closure 
of $H(G)$, $\mathcal H$. Assume there exists
a {\ccr}, say $\mathcal R$, of $I(L'_n), \ldots, I(L'_1)$ that makes $\mathcal H'$ superb. 
Then there is a pair $H'', \mathcal H''= (H_0, H''_i, L''_i)^n$ in the closure of $H(G)$, $\mathcal H$ such that
$\mathcal H''$ is a relevant ear decomposition and $\mathcal R$ 
induces a {\ccr} $\mathcal R'$ of $I(L''_n), \ldots, I(L''_1)$ that makes $\mathcal H''$ superb.
\end{lemma}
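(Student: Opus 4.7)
The plan is to reorder the ears of $\mathcal{H}'$ through modifications within the closure so that they partition into the block types required by the definition of \emph{relevant} (stars, gadgets, gadgets*, gadgets**, double gadgets), while arguing that the same correct reduction $\mathcal{R}$ carries over. Since a modification changes only the ear decomposition and not the trigraph, I expect $H''=H'$ in most cases; local exchanges would be invoked only to repair exceptional configurations.

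First, I would use Observation~\ref{o.cl} to fix, for each basic gadget $\mathcal{B}$ of $H=H(G)$, the seven internal ears (the base $E_1(\mathcal{B})$, the up $E_2(\mathcal{B})$, the antenna $3$-ear containing the fixed path of $\mathcal{B}$, the $2$-ear $E_4(\mathcal{B})$ or its $F_1,F_2$ replacements in the gadget** variant, and the three stars $D_1,D_2,D_3(\mathcal{B})$). These are forced to appear in $\mathcal{H}'$ by the admits $H_0,S$ condition together with the fact that $\mathcal{B}$ is a subgraph of $H'$.

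Second, I would classify the remaining ``connecting'' ears by examining the two boundary vertices of $\mathcal{B}$, namely its joint vertex $x$ and its replica vertex $u$; each has exactly one neighbour outside $V(\mathcal{B})$ since $H'$ is cubic. A case analysis on the ears incident to these external edges identifies the block type surrounding $\mathcal{B}$: a plain gadget when a $3$-ear extends $E_3'(\mathcal{B})$; a $1$-gadget* or $2$-gadget* when the external ear at $x$ is a $1$-ear or a $2$-ear ending at a star-centre (as forced by Observation~\ref{obs.2gad*}); a $1$-gadget** or $2$-gadget** when the replica vertex $u$ is connected through $F_1,F_2$ rather than via $E_4(\mathcal{B})$; and a double gadget when two joint vertices of distinct basic gadgets share a $2$-ear. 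The remaining stars form star blocks. This case analysis exhausts the ears of $\mathcal{H}'$ and produces a block partition.

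Third, I would reorder $L'_1,\ldots,L'_n$ so that each block appears contiguously, respecting the partial order induced by the ear-decomposition dependencies (leaves of later ears must lie on earlier ears or in $H_0$). This topological rearrangement is a modification in the sense of Definition~\ref{def.closure} and preserves the admits $H_0,S$ property, so the resulting pair $(H',\mathcal{H}'')$ lies in the closure of $(H(G),\mathcal{H})$. By construction $\mathcal{H}''$ is relevant.

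Finally, I would show that $\mathcal{R}$ induces a {\ccr} $\mathcal{R}'$ on $\mathcal{H}''$: a {\ccr} depends only on the connected partition of $V(H')$ into internal vertex sets of the ears and on the local choices made at each reduction step, not on the specific ordering of ears beyond the topological constraint. Since our reordering preserves the internal-vertex-set partition and all local incidence structure, $\mathcal{R}'$ is well-defined, inherits correctness, and avoids cut-obstacles at every $3$-ear exactly where $\mathcal{R}$ does; hence $\mathcal{H}''$ is superb.

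The main obstacle is the second step. One must verify that every possible incidence pattern of ears around a basic gadget $\mathcal{B}$, compatible with the admits $H_0,S$ condition and with the trigraph modifications allowed by the closure, falls into exactly one of the five permitted block types; any ear left unassigned between two blocks would prevent the relevant partition. If such an orphan arises, a local exchange (legitimate in the closure) on the offending edges must be shown to fold the configuration back into a recognised block, and establishing that a suitable local exchange is always available is the most delicate point in the argument.
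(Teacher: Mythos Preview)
Your overall architecture matches the paper's: keep $H''=H'$, locate the forced ears of each basic gadget via the admits-$H_0,S$ condition, and do a case analysis around the joint and replica vertices. But there is a genuine gap in how you propose to achieve the block partition.

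You treat a ``modification'' in Definition~\ref{def.closure} as merely a \emph{reordering} of the existing ears, and then reach for local exchanges when reordering is insufficient. Both moves are wrong. A modification is \emph{any} replacement of the ear decomposition by another one admitting $H_0,S$ on the same trigraph; in particular it allows you to split and recombine ears. The paper's proof uses exactly this freedom: when the joint vertex $x$ of a bad basic gadget $\mathcal{B}$ lies on a $2$-ear whose other internal vertex is the joint of a neighbouring $\mathcal{B}'$, the ears around $\mathcal{B}'$ may contain a $2$-ear $F'_1$ through the replica of $\mathcal{B}'$ that prevents $\mathcal{B},\mathcal{B}'$ from forming a double gadget as they stand. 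The fix is to \emph{replace} $F'_1,F_2(\mathcal{G}_{\mathcal{B}'})$ by $E_4(\mathcal{B}')$ together with the residual $1$-ear $F'_1\setminus\{z,u\}$. This is an ear modification, not a reordering, and it is certainly not a local exchange (which would alter $H'$ and is never invoked in the paper's argument). Your fallback to local exchanges would change the trigraph and then you would have to re-argue superbness from scratch.

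Two further points you are missing. First, these ear replacements can cascade: fixing $\mathcal{B}$ may disturb a neighbouring gadget, so the paper organises the argument as an induction on $|\Sigma(\mathcal{H}')|$, the number of basic gadgets not yet contained in a block-gadget, and shows each step strictly decreases this count. Your one-shot ``case analysis exhausts the ears'' does not account for this. Second, one sub-case (when $E_4(\mathcal{B})$ is absent, $F_1$ is a $1$-ear, and $E_3^*(\mathcal{B})$ is a $3$-ear) cannot be repaired by any modification; it is ruled out only because superb implies robust (Observation~\ref{obs:suprob}), which forces the descendant of $E_3^*(\mathcal{B})$ to have the right shape. You do not invoke robustness anywhere, so this case would defeat your analysis.
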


\begin{proof}
Note that by Observation~\ref{obs:suprob}, we have $\mathcal H'$ is a robust ear decomposition.
In the following, we show how to obtain $H'', \mathcal H''= (H_0, H''_i, L''_i)^n$
of Lemma~\ref{l.last} from $H', \mathcal H'= (H_0, H'_i, L'_i)^n$
by modifying $\mathcal H'$ only; meaning $H'=H''$. 
%
%
%
%

Since $H', \mathcal H'$ is in the closure of $H(G)$, $\mathcal H$, we get that
 $\mathcal H'$ admits $H_0, S$ (see Definition~\ref{def.S}).
By Observation~\ref{o.cl},
for each basic gadget $\mathcal{B}$ of $H(G)$, the ears 
\begin{equation*}\label{eq:core} E_1(\mathcal{B}),E_2(\mathcal{B}),
E_3^*(\mathcal{B}), D_1(\mathcal{B}),D_2(\mathcal{B}), D_3(\mathcal{B})\end{equation*} belong to $\{L'_1,\ldots,L'_n\}$,
where $E_3^*(\mathcal{B})$ is a 2-ear or a 3-ear that contains the fixed path of $\mathcal{B}$,
which is an element of $S$ by construction; recall that 
(as it is mentioned after the construction of $H(G)$), the ear $E_1(\mathcal{B})$
is a base, $E_2(\mathcal{B})$ is an up, $E_3^*(\mathcal{B})$ is an antenna,
and by condition (3) in the second part of Definition~\ref{def.S}, the
stars $D_1(\mathcal{B}), D_2(\mathcal{B}), D_3(\mathcal{B})$ cannot be modified.
In this proof, for a given $\mathcal B$, we call the sequence of ears $E_1(\mathcal{B}),E_2(\mathcal{B}),
E_3^*(\mathcal{B})$
the \emph{core} of $\mathcal B$. Without loss of generality, from now on, we assume that 
the list of ears $L'_1, \ldots, L'_n$ is ordered so that
each core forms a block. Therefore, we can consider 
a natural ordering, say $\prec$, of the basic gadgets of $H(G)$: 
$\mathcal{B} \prec \mathcal{B}'$ if $E_3(\mathcal{B})=L_{k}$, $E_3(\mathcal{B}')=L_{m}$ and $k<m$.

We denote by $\mathcal L$, the set of ears consisting of the cores of all basic gadgets of $H$.
We have every 3-ear of $\mathcal H'$ is in $\mathcal L$, because: $\mathcal H'$ admits $H_0, S$, which 
implies that if $L$ is a 3-ear of $\{L'_1, \ldots, L'_n\}$, then $L$ is either a base, or an up, or an antenna containing
an element of $S$. Hence, by Remark~\ref{o.cl1},~$L \in \mathcal L$.

From now on, we refer to a gadget, to a gadget*, to a gadget** and to a double gadget
as a \emph{block-gadget}.

If  every basic gadget of $H(G)$ is contained is some block-gadget of $\mathcal H'$,
then we have that $\mathcal H'$ is relevant and we can put $\mathcal H''= \mathcal H'$.
Therefore, we assume that the set of basic gadgets of  $H(G)$  that are not contained in some block-gadget
of $\mathcal H'$ is not empty and we denote the set of such basic gadgets of $H(G)$ by $\Sigma(\mathcal H')$.

The following claim therefore proves Lemma~\ref{l.last}.

\begin{claim}\label{cl.last}
There exists an ear decomposition $\mathcal H^*$ of $H'$  
such that $|\Sigma(\mathcal H^*)| < |\Sigma(\mathcal H')|$ and $R$ induces a {\ccr}, say $\mathcal R^*$, of  $\mathcal H^*$. Moreover, each 3-ear in the set of building ears of $\mathcal H^*$ also belongs to $\{L'_1, \ldots, L'_n\}$.
Further, $\mathcal H^*$ is superb and
in the closure of $H(G)$, $\mathcal H$.  
\end{claim}

The rest of this proof is devoted to prove Claim~\ref{cl.last}.

We obtain $\mathcal H^*$ from $\mathcal H'$ by modifying the list $L'_1, \ldots, L'_n$.

Let $\mathcal{B}$ be the first basic gadget, according to the order $\prec$, such that
$\mathcal{B} \in \Sigma(\mathcal H')$ (namely, is not contained in a block-gadget of $\mathcal H'$). 
There are two possible scenarios: (1) either the ear $E_4(\mathcal{B})$ is in $\{L'_1, \ldots, L'_n\}$, (2) or not.
\begin{itemize}
 \item[Case (1)] $E_4(\mathcal{B})$ is in $\{L'_1, \ldots, L'_n\}$.
\end{itemize}
We assume the first ear of $\mathcal{B}$ is $L'_r$.
If $E_4(\mathcal{B})$ is in $\{L'_1, \ldots, L'_n\}$, then $E_3^*$ is a 2-ear, otherwise,
$\mathcal{B}$ would belong to a gadget. Let $x$ be the joint vertex of $\mathcal{B}$.
Clearly, $x$ is a leaf of $E_3^*$. 
This implies that there exists an ear $L \in \{L'_1, \ldots, L'_{r-1}\}$
that contains $x$ as an internal vertex, with $L$ a 1-ear or a 2-ear. 
If $L$ was a 1-ear, then $\mathcal{B}$ would be contained
in a 1-gadget*. 
Thus, $L$ is a 2-ear. Since $L$ is not contained in a 2-gadget*, we have that
for $I(L)=\{x,x'\}$, the vertex $x'$ is the joint or the replica vertex  of a basic gadget $\mathcal{B}'\neq \mathcal{B}$.
If $x'$ was the replica vertex of $\mathcal{B}'$, then $\mathcal{B}'$ would not be contained in a block gadget (because
$E_4(\mathcal{B}')$ would not be in $\{L'_1, \ldots, L'_n\}$), which contradicts the choice of $\mathcal{B}$.

Therefore, $x'$ is a joint vertex of $\mathcal{B}'$. Since $\mathcal{B}$ is not contained in a double gadget,
then neither $E_4(\mathcal{B}')$, nor both $F_1(\mathcal{G}_{\mathcal{B}'})$, $F_2(\mathcal{G}_{\mathcal{B}'})$ 
belong to $\{L'_1, \ldots, L'_n\}$, where $F_1(\mathcal{G}_{\mathcal{B}'})$, $F_2(\mathcal{G}_{\mathcal{B}'})$,
denote the 1-ears of the gadget** $\mathcal{G}_{\mathcal{B}'}$ (see~(\ref{eq:1gadget**})).
Therefore, $F_2(\mathcal{G}_{\mathcal{B}'}), F'_1 \in \{L'_1, \ldots, L'_n\}$, where 
$F'_1$ is a 2-ear that contains the edges of $F_1(\mathcal{G}_{\mathcal{B}'})$ 
and an extra edge $e$ not in $E(\mathcal{G}_{\mathcal{B}'})$. Clearly, $F'_1$
is not contained in a block-gadget.

In this case we let the list of building ears of $\mathcal H^*$ be obtained by 
removing $F'_1, F_2(\mathcal{G}_{\mathcal{B}'})$ from $\{L'_1, \ldots, L'_n\}$
and adding two new ears $E_4(\mathcal{B}')$
and the 1-ear contained in $F'_1$ that contains $e$ and has end vertex
the replica vertex of $\mathcal{B}'$. Hence, $\mathcal{B}$ and $\mathcal{B}'$
belong to a double gadget of $\mathcal H^*$.
As $F'_1$ is not contained
in a block-gadget of $\mathcal H'$, we have that  $\Sigma(\mathcal H^*)+2=\Sigma(\mathcal H')$. The result follows.

\begin{itemize}
 \item[Case (2)] $E_4(\mathcal{B})$ is not in $\{L'_1, \ldots, L'_n\}$.
\end{itemize}

Let $\{z,u,v,y\}$ be the vertex set of $E_4(\mathcal{B})$, where $u$ is the replica vertex
of $\mathcal{B}$ (as in Figure~\ref{fig:bagadget}). As $E_4(\mathcal{B})$ is not in $\{L'_1, \ldots, L'_n\}$,
the 1-ear, say $F_2$, on vertex set $\{u,v,y\}$ is in $\{L'_1, \ldots, L'_n\}$ 
and a 1-ear or a 2-ear $F_1$ containing $\{z,u\}$ is in $\{L'_1, \ldots, L'_n\}$. 

First let $F_1$ be a 1-ear. If $E^*_3(\mathcal{B})$ is a 2-ear, then the proof
is the same as the proof of Case~(1): just use $F_1(\mathcal{B}), F_2(\mathcal{B})$ instead of $E_4(\mathcal{B})$.
If $E^*_3(\mathcal{B})$ is a 3-ear, then $\mathcal H'$ is not robust: to see this, note that 
the descendant of $E^*_3(\mathcal{B})$ has exactly
2 connected components --- since $v$ is a leaf of the star $D_1(\mathcal{B})$
and $u$ has degree 3 in $F_1 \cup F_2$, there is no path in the descendant
of $E^*_3(\mathcal{B})$ that connects the joint of $\mathcal{B}$ to a vertex from $\{u,v\}$ (see Figure~\ref{fig:gadget}) --- and none of them is an isolated vertex connected to two vertices in $V(H_0)$ (see Definition~\ref{def.wr}). 
Because of Observation~\ref{obs:suprob}, this contradicts the assumption that $\mathcal H'$ is superb.

%
%
%

If $F_1$ is a 2-ear, then  we obtain a list of building ears $\mathcal L$ by
replacing $F_2,F_1$ in $\{L'_1, \ldots, L'_n\}$, by $E_4(\mathcal{B})$ and the 1-ear $F_1 -\{z,u\}$.
Therefore, either $\mathcal{B}$ is in a block-gadget of $\mathcal L$ (which implies $\Sigma(\mathcal H^*)+1=\Sigma(\mathcal H')$), or not. If not, the result follows by Case~(1).

 This finishes the proof of Claim~\ref{cl.last}.

\end{proof}


\section{Ear decompositions of 3-edge-connected cubic graphs}\label{sec:3conn}

Let $G$ be a cubic bridgeless graph. 
Let $G_0, G_1, \ldots, G_l$  and $P_1, \ldots, P_l$ be two sequences of subgraphs of~$G$ 
such that $G_0$ is a cycle of~$G$, $G_l = G$, for each $i \in [l]$,
the subgraph $P_i$ is an ear, $V(P_i) \cap V(G_{i-1})$
is the set of leaves of $P_i$, $E(P_i) \cap E(G_{i-1})=\emptyset$, and 
$G_{i}$ is the union of $G_{i-1}$ and $P_i$, that is, $V(G_{i})= V(G_{i-1}) \cup V(P_i)$ and $E(G_{i})=E(G_{i-1}) \cup E(P_i)$.
We say that $(G_0, G_1, \ldots, G_l, P_1, \ldots, P_l)$, in short $(G_0, G_i, P_i)^l$,
is an \emph{ear decomposition} of $G$. Moreover, in the case that $t \leq l$, we say that 
$(G_0, G_i, P_i)^t$ is a partial ear decomposition of $G$
and if $\{P_1, \ldots, P_l\}$ also contains edges,  then we refer to $(G_0, G_i, P_i)^l$ 
as an \emph{edge+ear decomposition} of $G$.

We now need to generalize the concept of descendant.
Let  $i \in [l]$ such that  $|I(P_i)|\geq 3$ and $V(P_i)= \{\alpha_i, v^1_{i},\ldots, v^{k}_{i},\beta_i\}$, where   $V(P_i) \cap V(G_{i-1})=\{\alpha_i, \beta_i\}$.
We say that $S \subset \{v^1_{i},\ldots, v^{k}_{i}\}$ is a {\em segment} of $P_i$ if 
$|S|\geq 3$ and $S$ induces a connected subgraph of~$P_i$.
Let $G'_i$ denote the graph obtained from $G$ by deleting $V(G_{i})$.
For each segment $S$ of $P_i$, the maximal subgraph $G^S_i$ of $G'_i$
such that $G[V(G^S_i) \cup S]$ is connected is called the \emph{descendant of~$S$}.

\begin{definition}[Super robust ear decomposition]\label{def:superrobust}
An ear decomposition, say $(G_0, G_i, P_i)^l$, of $G$ is \emph{super robust} 
if the following conditions hold:
\begin{itemize}
 \item[(a)]  $G-V(G_0)$ is a connected graph, and
 \item[(b)] for each $i \in [l]$ such that $|I(P_i)|\geq 3$ and for each segment $S$ of $P_i$, 
the descendant of $S$ is connected.
\end{itemize}
\end{definition}

By the construction of the canonical ear decomposition $\mathcal{H}$
of $H(G)$ and the definition of super robust ear decompositions of cubic graphs,
it is immediate that $\mathcal{H}$ is robust if the ear decomposition of $G$ used in the construction of $\mathcal{H}$ is super robust.
Lemma~\ref{lemma:supimplrob} follows.

\begin{lemma} \label{lemma:supimplrob}
The canonical ear decomposition $H(G)$ is robust if the ear decomposition of $G$ chosen for the construction of $H(G)$ is super robust. 
\end{lemma}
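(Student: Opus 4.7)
The plan is to verify Definition~\ref{def.wr} for every 3-ear $L$ in the canonical ear decomposition $\mathcal H$ of $H(G)$: i.e., to check that the descendant of $I(L)$ has at most two connected components, and if there are two then one is an isolated vertex adjacent to exactly two vertices of $V(H_0)$. By inspection of (\ref{eq:gadget}) and (\ref{eq:gadget*}), the 3-ears of $\mathcal H$ fall into three families: the ears $E_1(\mathcal B)$ and $E_2(\mathcal B)$ internal to each basic gadget $\mathcal B$, and the concatenated 3-ear $E_3 = E'_3(\mathcal B_j)\cup \{x_{j-1},x_j\}$ linking two successive basic gadgets of a substantial path.

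For $E_1(\mathcal B)$ I would read off the descendant from Figure~\ref{fig:bagadget} and the ordering in (\ref{eq:gadget}). The centre of the star $D_2(\mathcal B)$ is the only isolated component: it is adjacent to $w'\in I(E_1)$ and to two vertices of $V(H_0)$, and nowhere else. The remaining future neighbours of $I(E_1)$ enter a single large component---via $a'$ into the centre of $D_1(\mathcal B)$, whose other leaves $w,v$ lie in $I(E_2)\cup I(E_4)$ and are therefore in the future, and via $b'$ into the bulk of the gadget by construction. Hence the descendant of $E_1$ is ``one large component plus one isolated vertex with two $V(H_0)$-neighbours'', exactly as required. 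The case of $E_2(\mathcal B)$ is symmetric, with the centre of $D_3(\mathcal B)$ playing the role of that of $D_2(\mathcal B)$.

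For a concatenated 3-ear $E_3$, no star centre of $\mathcal B_j$ enters the descendant as an isolated component: by the ordering of ears in (\ref{eq:gadget}), the non-$V(H_0)$ leaves of $D_2(\mathcal B_j)$ and $D_3(\mathcal B_j)$ already lie in $V(H_i)$, so these centres are not reachable from $I(E_3)$ at all, while the centre of $D_1(\mathcal B_j)$ is pulled into a single component via $v\in I(E_4(\mathcal B_j))$, which is adjacent to $y\in I(E_3)$. The descendant of $I(E_3)$ is therefore a single subgraph comprising the remainder of $\mathcal B_j$, all subsequent basic gadgets along the current substantial path $Q_i$, and every ear of $H(G)$ built after $P_i$. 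Its connectedness is precisely what super robustness of the ear decomposition of $G$ delivers: condition~(a) of Definition~\ref{def:superrobust} translates, through the construction of $H(G)$, to connectedness of the union of all ears built after $P_i$; and condition~(b), applied to the segment of $P_i$ whose replicas are $u_{j+1},\ldots,u_l$, supplies connectedness along the remainder of $Q_i$ when $|I(P_i)|\ge 3$, while in the smaller cases $|I(P_i)|\in\{1,2\}$ no concatenated $E_3$ of the relevant kind arises.

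I expect the main obstacle to be the book-keeping: confirming which leaves of each basic gadget already lie in $V(H_i)$ at the moment the given 3-ear is processed---since this decides whether the corresponding star centre contributes an isolated component of the descendant---and translating segments of $P_i$ in $G$ to the correct stretches of basic gadgets in $H(G)$ in order to invoke super-robustness condition~(b). Conceptually the proof is routine, as signalled by the paper's description of the result as ``immediate''; the only substance lies in unfolding the definitions of basic gadget, canonical ear decomposition, and super-robust simultaneously.
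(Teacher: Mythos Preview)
The paper gives no detailed proof of this lemma; it simply asserts that the result is ``immediate'' from the construction of the canonical ear decomposition and the definition of super robust. Your case analysis over the three families of 3-ears is exactly how one would flesh this out, and your treatment of $E_1(\mathcal B)$ and $E_2(\mathcal B)$ is correct.

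Your handling of the antenna $E_3$, however, contains concrete bookkeeping errors. First, the replica $u_j$ lies in the descendant of $I(E_3(\mathcal B_j))$---it is reached from $z\in I(E_3)$ through $E_4$---so the segment of $P_i$ you must invoke has to contain $b_{i_j}$; the segment $\{b_{i_{j+1}},\ldots,b_{i_l}\}$ you name says nothing about the component of $G-V(G_i)$ containing $u_j$'s third neighbour, and that is precisely the piece you need to glue to the rest. Second, your assertion that ``no concatenated $E_3$ of the relevant kind arises'' when $|I(P_i)|=2$ is false: for $l=2$ the list $R_1$ follows the pattern~(\ref{eq:gadget}) and does contain a genuine 3-ear $E_3$. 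Third, even with the corrected segment $\{b_{i_j},\ldots,b_{i_l}\}$, the boundary case $j=l-1$ yields a two-element set, below the threshold $|S|\ge 3$ in the definition of segment; for $l\ge 3$ this is repaired by applying condition~(b) to any three-term segment containing both $b_{i_j}$ and $b_{i_{j+1}}$, but for $l=2$ condition~(b) is vacuous for $P_i$ and you need a separate argument. Finally, condition~(a) of Definition~\ref{def:superrobust} concerns only $G-V(G_0)$, so it does not deliver ``connectedness of the union of all ears built after $P_i$'' for $i\ge 1$ as you claim; it is relevant only to the gadgets coming from~$P_0$.
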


It is a well-know fact that the {\dcdc} conjecture holds if and only if 
it holds in the class of 3-edge-connected cubic graphs.
The next lemma (whose proof is in~\cite{Cheriyan}) implies that each 3-edge-connected cubic graph admits a super robust ear decomposition.

\begin{lemma}\label{th:robust}
 Let $G$ be a 3-edge-connected cubic graph.
 There exist $e \in E(G)$ and an ear decomposition $(G_0, G_i, P_i)^l$ of $G-e$ such that
$P_l$ is a path of lenght 2, and for each $i \in \{1, \ldots, l-1\}$, 
 denoting $V_i = V(G_0) \cup V(P_1) \cup \cdots \cup V(P_i)$, the graph $G[V-V_i]$ is connected. 
\end{lemma}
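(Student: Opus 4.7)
The plan is to build the ear decomposition by a greedy peeling procedure that exploits $3$-edge-connectivity at each step, reserving one endpoint of $e$ to be added last so that the final ear is a path of length $2$.

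First I would fix the edge $e = \{u, v\}$ and choose an initial cycle $G_0 \subseteq G - e$ such that $v \notin V(G_0)$ and $G[V - V(G_0)]$ is connected (note that this complement then contains $v$). The existence of such a non-separating starting cycle is a standard consequence of $3$-edge-connectivity in cubic graphs: one can take a cycle through $u$ avoiding $v$ and, by exchanging edges along alternating paths if necessary, arrange that its complement is connected. That $G - v$ is $2$-edge-connected (so that cycles through $u$ avoiding $v$ exist at all) follows from a short cut-counting argument using $3$-edge-connectivity and cubicity.

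Next I would proceed inductively. Suppose $(G_0, G_i, P_i)^t$ is a partial ear decomposition of a subgraph of $G - e$ satisfying $v \notin V_t$, $V_t \cup \{v\} \ne V(G)$, and $G[V - V_t]$ connected. The inductive claim is that there exists an ear $P_{t+1}$ whose internal vertices lie in $V - V_t \setminus \{v\}$ such that $G[V - V_{t+1}]$ remains connected. Concretely, I would pick a vertex $w \ne v$ of $V - V_t$ adjacent to $V_t$ with $G[V - V_t] - w$ connected. Since $G$ is cubic, $w$ has between $1$ and $3$ neighbors in $V_t$; if $w$ has at least $2$ such neighbors, it becomes the unique internal vertex of a $1$-ear. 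Otherwise, I extend through $V - V_t \setminus \{v\}$ along a path of non-cut-vertices until exiting back to $V_t$, forming a longer ear. The existence of a suitable $w$ follows from a cut argument: if every candidate $w$ failed to work (i.e.\ were a cut-vertex of $G[V - V_t]$ or forced the ear to pass through $v$), then combining the boundary edges at $V_t$ with cubicity would produce an edge cut of $G$ of size at most $2$, contradicting $3$-edge-connectivity.

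When the peeling reduces to $V - V_{l-1} = \{v\}$, the last ear $P_l$ is forced: $v$ has exactly two $(G - e)$-neighbors, both lying in $V_{l-1}$, and together with $v$ they form the desired path of length $2$. The main obstacle is the inductive step under the extra constraint that $v$ remain outside throughout the construction, and it is here that $3$-edge-connectivity (rather than mere $2$-edge-connectivity) is essential --- it provides the spare connectivity needed at every step to route the next ear without isolating $v$ from the remaining unprocessed region. A clean formalization likely uses submodularity of edge cuts or applies Menger's theorem to paths that avoid $v$, to make the cut-counting argument above rigorous.
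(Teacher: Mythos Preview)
The paper does not give its own proof of this lemma; it simply cites Cheriyan--Maheshwari \cite{Cheriyan}, whose nonseparating ear decomposition for $3$-connected graphs is exactly the statement you are trying to reprove (recall that for cubic graphs $3$-edge-connectivity and $3$-vertex-connectivity coincide). Your outline is in the same spirit as that classical argument: start from a nonseparating induced cycle and peel off ears one by one while keeping the complement connected, reserving a designated vertex $v$ for the final length-$2$ ear.

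Where your sketch is thin is precisely the place where the Cheriyan--Maheshwari argument does real work. In the inductive step you assert that one can pick $w\neq v$ adjacent to $V_t$ with $G[V\setminus V_t]-w$ connected, and if $w$ has only one neighbour in $V_t$ you ``extend through $V\setminus V_t\setminus\{v\}$ along a path of non-cut-vertices until exiting back to $V_t$''. Neither claim is justified. The first needs an argument via the block--cut tree of $G[V\setminus V_t]$: a leaf block not containing $v$ (or containing $v$ only as its articulation vertex) must send at least two edges to $V_t$ by $3$-connectivity, and any non-articulation vertex of that block is a valid $w$. The second is more delicate: a greedy walk along ``non-cut-vertices'' can get stuck, and one must instead take the ear entirely inside a suitable end-block and use $3$-connectivity to guarantee two exits to $V_t$. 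Your final ``cut argument'' (that failure would yield a $\leq 2$ edge cut) is the right intuition but, as written, does not handle the interaction between the constraint $w\neq v$ and the block structure of the complement.

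In short: the strategy is the standard one and matches what the paper invokes by citation, but the inductive step needs the block-tree / end-block analysis rather than a one-line cut count to be a proof.
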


\section{A toroidal example}
\label{sec.surfaces}

Let $G$ be a bridgeless graph embedded on the torus obtained from the toroidal square grid $T$
by deleting a perfect matching composed of horizontal edges only in such a way that
the obtained embedding 
has neither loops in the dual, nor multiple edges.
In addition, we fix a super robust ear decomposition of $G$ starting at cycle $C$
such that $C$ contains only vertical edges of $T$; 
the facial cycles of the embedding are seminal for constructing such ear decomposition.

Let $G'$ be the cubic graph obtained from $G$ as described in Section~\ref{sec:main}.
We take the canonical ear decomposition $\mathcal{H}$ of $H(G')$ and discuss the question:  Does $H(G'), \mathcal{H}$ satisfy Conjecture~\ref{conj:weakavoidance}?

There is a natural directed cycle double cover of $G$, which consists of all the facial cycles 
defined by the embedding of $G$ on the torus. This induces a reduction process, say $\mathcal{R}$, on 
each ear decomposition that belongs to the closure of $H(G'), \mathcal{H}$. In particular, a reduction process on $\mathcal{H}$.

We specify neither the details of the chosen super robust ear decomposition of $G$, nor the canonical ear decomposition of $H(G')$.
However, a critical part of any reduction process occurs at the end; namely, when
sequence of ears of $\mathcal{H}$ corresponding to the vertices of cycle $C$ are reduced. 

In order to study this case, we need to assume that the ears of $\mathcal{H}$ of the part of $H(G')$ 
corresponding to the vertices in $G'\setminus C$ are correctly reduced (according to $\mathcal{R}$) without 
creating cut-obstacles. Let $M$ denote the resulting mixed graph
and $H_M$ denote its underlying graph.
Recall that according to the construction of $H(G')$, graph $H_M$ is obtained following the next steps.
\begin{itemize}
 \item First let $H_0$ be the starting cycle of $H_M$ and $V_0 = \{u, v, w\}$.
 \item Secondly, let $e= \{u', v'\}$ be an edge of $C$ and let $C_0$ the path with set of edges $E(C)-\{e\}$. 
  We attach $C_0$ to $H_0$ by identification of $u$ with $u'$ and $v$ with $v'$.
 \item Finally, for each vertex $z$ of $V(C_0)\setminus \{u',v'\}$ we introduce a basic gadget $B_z := B(x_z,u_z)$ and identify $z$ with $x_z$.
\end{itemize}

We now describe the set of arcs of $M$.
Since the previous correct reductions were performed according to $\mathcal{R}$, 
we have $M$  is obtained from $H_M$ by 
adding two disjoint directed cycles so that the union of their vertex sets is $\{w\} \cup \{u_z: z~\text{ vertex of } V(C_0)\setminus \{u',v'\}\}$. This finishes the construction of $M$.

The canonical ear decomposition of $\mathcal{H}$ induces an ear decomposition $\mathcal{M}$ of $H_M$. 
Let $C_0 = u' z_1 \ldots z_t v'$. The ear decomposition $\mathcal{M}$ consist of gadgets $B_{z_1}, \ldots ,B_{z_{t-1}}$ 
and exactly one 1-gadget* $B_{z_t}$.
We claim that $\mathcal{M}$ can be modified to $\mathcal{M}'$ so that 
$\mathcal{R}$ induces a complete reduction
on $\mathcal{M}'$ without cut obstacles. Indeed, $\mathcal{M}'$ is obtained as follows:
Consider two adjacent vertices $z, z'$ in $\{z_1, \ldots, z_t\}$
such that the directed cycle incident to $u_{z}$ is distinct from the directed cycle incident to $u_{z'}$;
note that these vertices always exist. Then, to obtained the ear decomposition $\mathcal{M}'$,
we replace $\mathcal{B}_{z}$, $\mathcal{B}_{z'}$
by the 2-gadget* $\mathcal{D}(\mathcal{B}_{z},\mathcal{B}_{z'})$.

\newpage

\end{document}